\newcommand{\D}{\ensuremath{{\cal D}}}
\newcommand{\E}{\ensuremath{{\cal E}}}
\newcommand{\mb}[1]{\ensuremath{\mathbb{#1}}}
\newcommand{\N}{\mb{N}}
\newcommand{\R}{\mb{R}}
\newcommand{\C}{\mb{C}}
\newcommand{\G}{\ensuremath{{\cal G}}}
\newcommand{\Char}{\ensuremath{\text{Char}}}
\renewcommand{\d}{\ensuremath{\partial}}
\newcommand{\diff}[1]{\frac{d}{d#1}}
\newfont{\bl}{msbm10 scaled \magstep2}
\newtheorem{theorem}{Theorem}[section]
\newtheorem{proposition}[theorem]{Proposition}
\theoremstyle{definition}
\newtheorem{remark}[theorem]{Remark}
\newcommand{\beq}{\begin{equation}}
\newcommand{\eeq}{\end{equation}}
\renewenvironment{proof}[1][Proof]{\textbf{#1.} }{\ \rule{0.5em}{0.5em}}
\newcommand{\dis}[2]{\langle #1 , #2 \rangle}
\newcommand{\notmid}{\mid\kern-0.5em\not\kern0.5em}
\newcommand{\al}{\alpha}
\newcommand{\be}{\beta}
\newcommand{\del}{\delta}
\newcommand{\eps}{\varepsilon}
\newcommand{\la}{\lambda}
\newcommand{\om}{\omega}
\renewcommand{\Re}{\ensuremath{\text{Re}}}
\newcommand{\ovl}[1]{\overline{#1}}
\title{Generalized solutions for the Euler-Bernoulli
 model with distributional forces\thanks{Supported by Ministry of
Science of Serbia, project 144016, and the Austrian
Science Fund (FWF) START program Y237 on 'Nonlinear distributional
geometry'}}
\author{\emph{G\"unther H\"ormann}\\
Fakult\"{a}t f\"{u}r Mathematik der Universit\"{a}t Wien\\
Nordbergstra{\ss}e 15, A-1090 Wien, Austria\\
\ \\
and\\
\ \\
\emph{Ljubica Oparnica}\\
Institute of Mathematics of the Serbian Academy of Science\\
Kneza Mihajla 35, 11000 Belgrade, Serbia}
\begin{document}

\maketitle

 \begin{abstract}
We establish existence and uniqueness of generalized solutions to the initial-boundary value problem corresponding to an Euler-Bernoulli beam model from mechanics. The governing partial differential equation is of order four and involves discontinuous, and even distributional, coefficients and right-hand side. The general problem is solved by application of functional analytic techniques to obtain estimates for the  solutions to regularized problems. Finally, we prove coherence properties and provide a regularity analysis of the generalized solution.\\[2mm]
 Key words: generalized solutions to partial differential equations, functional analytic methods,
 differential equations with discontinuous
 coefficients,  Colombeau generalized functions, nonlinear theories of generalized functions.\\[2mm]
 AMS 2000 Subject Classification: 35D05; 35D10, 46F30,  35Q72.
 \end{abstract}

 \section{Introduction and basic estimates}\label{intro}

  We consider the question of existence and uniqueness of a generalized solution to the initial-boundary value problem
   \begin{align}\label{P}
   \d^2_tu + Q(t,x,\d_x)u &= g, \nonumber \\
  u|_{t=0} &= f_1, \d_t u|_{t=0} = f_2,\\
  u|_{x=0} &=u|_{x=1}=0, \d_x u|_{x=0} =  \d_x u|_{x=1}=0,\nonumber
 \end{align}
  where $Q$ is a
  differential operator of the form
$$
  Q u  :=\d^2_x(c(x)\d^2_x)u + b(x,t)\d_x^2 u,
$$
  and $b,c,g,f_1$ and $f_2$ are generalized functions. The precise structure of the above problem is motivated by a model from mechanics describing the displacement of a beam under axial and transversal forces, which we briefly present in Subsection 1.1.

In Subsection 1.2 we review basics from Colombeau's nonlinear theory of generalized functions, which are the framework of our main results in Section 3.

In Subsection 1.3 we briefly sketch the well-known functional analytic tools from the variational approach and give a refined version of energy estimates, which lie at the heart of the proofs in Sections 2 and 3.

Section 2 is devoted to existence and uniqueness of solutions  in the Sobolev space setting for (slightly) regularized coefficients $b$ and $c$.

Finally, Section 3 establishes existence and uniqueness of a solution to the original problem in the space of Colombeau functions based on  families of weak solutions to the corresponding regularized problem. Furthermore, we prove coherence of the generalized solution with classical and weak solutions in case of smooth coefficients but also analyze generalized regularity properties in the non-smooth case.

%%%%%%%%%%%%%%%%% Subsection 1.1 %%%%%%%%%%%%%%%%%%%%%%%%%%%

  \subsection{The Euler-Bernoulli model}\label{model}

This subsection serves as a motivation
  for the analysis of problems of the type (\ref{P}).
  Consider the Euler-Bernoulli beam  under a distributed
  transversal force $g$ and an axial force $P$ (cf.\ \cite{Atanackovic:97}).   As has been initiated in \cite{HO:07} we pursue to investigate the case when the beam
  consists of two parts with
  different physical properties and cross sections.
Let $u$ denote the displacement depending on the spatial variable $x$ and time $t$.
Accordingly the differential equation
  of the transversal motion reads
\begin{equation}\label{PDE}
   \frac{\d^2}{\d x^2}\left( A(x)\frac{\d^2u}{d x^2}\right)
   +
    P(t)\frac{\d^2u}{\d x^2} + R(x)\frac{\d^2u}{\d t^2} =
    g(x,t),\quad x\in [0,1], \, t > 0,
\end{equation}
  where:
\begin{itemize}
   \item $A$ denotes the bending stiffness and is given by
    $A(x) = EI_1 + H(x-x_0)EI_2-EI_1$. Here, the constant
    $E$ is the modulus of elasticity and
    $I_1$, $I_2$ ($I_1\neq I_2$) are the moments of
    inertia that
    correspond to the two parts of the beam
    and $H$ is the Heaviside jump function.
  \item $R$ denotes the line density (i.e., mass per length) of the material and is of the form
    $ R(x)= R_0 + H(x-x_0)(R_1-R_2).$
  \item $P$ represents the axial force, given by a time dependent function or
  distribution. For example, it may be of the form $ P = P_0 + P_1 \delta (t-t_0)$ where $P_0>0$, $P_1>0$ and
  $t_0 > 0$, or $P = P_0 + P_1\sin(\om t)$ with $P_0>0$,
  $P_1>0$ and $\om >0$.
  \item $g$ is the force term and can be of the  form $g(x,t)= F_0\del (x-x_1)$,
  where $x_1\in (0,1)$. Furthermore, for applications also a specific dependence of $g$ on $u$ is of interest,
  as, for example, in the case of the so-called Winkler foundation with  $g = -c\, u$, $c > 0$ constant.
  \item As mentioned above, $u=u(x,t)$ denotes the displacement. Thus,
  its second derivatives $\frac{\d^2u}{\d t^2}$ and $\frac{\d^2u}{\d x^2}$
  represent acceleration and linearized
  curvature\footnote{without linearization it would read
  $k = \frac{\frac{\d^2u}{\d x^2}}{(1 + (\frac{\d u}{\d
  x})^2)^{\frac{3}{2}}}$.
  }, respectively.
  \end{itemize}
  In addition to  (\ref{PDE}) we consider initial conditions:
  \begin{equation}\label{ICpde}
  u(x,0) = f_1(x),\quad \d_t u (x,0) = f_2(x),
  \end{equation}
  where $f_1$ and $f_2$ are the initial displacement and the initial
  velocity.
  We consider the beam to be
  fixed at both ends and also supply
  boundary conditions of the form
  \begin{equation}\label{BCpde}
   u(0,t) = u(1,t) = 0, \quad \d_x u (0,t)=
   \d_x u (1,t) = 0.
  \end{equation}

By a change of variables
 $t\to \tau$ via $t(\tau) = \sqrt{R(x)}\tau$
 we transform the problem (\ref{PDE}-\ref{BCpde}) into the standard form given in (\ref{P}).
 The function $c$ in (\ref{P}) equals $A$ and therefore is of Heaviside
 type and the function $b$ is then given by
 $b(x,t)= P(R(x)t)$ and its regularity properties
 depend on the assumptions on $P$ and $R$.

We show in Section 2 that standard functional analytic techniques reach as far as the following: boundedness of $c$ and $b$
together with sufficient (spatial Sobolev) regularity of the initial values $f_1, f_2$  as well as of $g$ ensure existence of a unique solution $u\in L^2(0,T; H^2((0,1)))$ to (\ref{P}).
However, the prominent case $ P = P_0 + P_1 \delta (t-t_0)$ is clearly not covered by such a result.
Moreover, since both $c$ and $\d^2_xu$ are distributions their product is not always defined. In order to allow for these stronger singularities one needs to go beyond distributional solutions.

%%%%%%%%%%%%%% Subsection 1.2 %%%%%%%%%%%%%%%%%%%%%%%%

 \subsection{Nonlinear theory of generalized
 functions}\label{introCol}
  We will set up and solve problem \eqref{P}
  in the framework of algebras of generalized functions on the domain $X_T = (0,1) \times (0,T)$ (with $T > 0$).  To be more specific we employ a variant of the Colombeau spaces based on $L^2$-norm estimates as introduced in \cite{BO:92} (and applied in \cite{GH:04}). Needless to say that the general theory is set-up on arbitrary open subsets of $\R^d$ or even on smooth manifolds, but we use this opportunity to introduce notation appropriate for our specific model (cf.\ \cite{Colombeau:83,Colombeau:84,GKOS:01,O:92}).
  The
  basic objects defining our generalized functions are regularizing
  families $(u_{\eps})_{\eps\in (0,1]}$ of smooth functions $u_{\eps}\in
  H^{\infty}(X_T)$, the space of smooth functions on $X_T$ all of whose derivatives belong to $L^2$. To simplify notation we will henceforth write $(u_{\eps})_{\eps}$
  to mean $(u_{\eps})_{\eps\in (0,1]}$.

We single out the following subalgebras:

  {\it Moderate families}, denoted by ${\cal E}_{M,H^{\infty}(X_T)} $, are defined
  by the property
  $$
  \forall \al\in\N_0^n, \exists p\geq 0:
  \|\d^{\al}u_{\eps}\|_{L^2(X_T)}= O(\eps^{-p}), \quad \text{ as }
  \eps\to 0.
  $$

 {\it Null families}, denoted by ${\cal N}_{H^{\infty}(X_T)} $, are the families in ${\cal E}_{M,H^{\infty}(X_T)} $
  satisfying
 $$
  \forall q\geq 0: \|u_{\eps}\|_{L^2(X_T)} = O(\eps^q) \quad \text{ as }
  \eps\to 0.
 $$
 Hence moderate families satisfy $L^2$ estimates with at most
 polynomial divergence as $\eps\to 0$, together with all
 derivatives,  while null families vanish very rapidly as $\eps \to 0$.
 For the latter, one can show that, equivalently, all derivatives
 satisfy estimates of the same kind (cf. \cite[Proposition 3.4(ii)]{Garetto:05}). The
 null families form a differential ideal in the collection of moderate families.

 The {\it Colombeau algebra} is the factor algebra
 $$
  {\cal G}_{H^{\infty}(X_T)} = {\cal E}_{M, H^{\infty}(X_T)}/{\cal N}_{H^{\infty}(X_T)}.
 $$
% Notation in \cite{BO:92} is ${\cal G}_{2,2}$.

We will occasionally use the notation $[(u_\eps)_\eps]$ for the equivalence class  in
  ${\cal G}_{H^{\infty}(X_T)}$ with representative $(u_\eps)_\eps$.
  The algebra ${\cal G}_{H^{\infty}((0,1))}$ on the interval $(0,1)$ is defined in the same way and its
  elements can be considered being members of ${\cal G}_{H^{\infty}(X_T)}$ as well.

\begin{remark} \label{rem_restriction}
\begin{enumerate}
\item
  For any  $(u_{\eps})_{\eps}$ in ${\cal E}_{M,H^{\infty}(X_T)}$ we have smoothness up to the boundary, i.e.\
  $u_{\eps}\in C^{\infty}([0,1] \times [0,T])$ and therefore
  the restriction $u |_{t=0}$  of a generalized function  $u \in {\cal G}_{H^{\infty}(X_T)}$ to
 $t=0$ is well-defined by $u_{\eps}(\cdot,0)\in {\cal E}_{M,H^{\infty}((0,1))}$.
(As emphasized in \cite[Remark 2.2]{BO:92} this follows from general Sobolev space properties, since $X_T$ clearly is a Lipschitz domain; cf.\ \cite{AF:03}.)

\item If in addition to $v \in {\cal G}_{H^{\infty}((0,1))}$ we have for some representative $(v_\eps)_\eps$ of $v$ that  $v_\eps \in H_0^{2}((0,1))$, then $v_\eps(0) = v_\eps(1) = 0$ and $\d_x v_\eps (0) = \d_x v_\eps(1) = 0$ (cf.\ Remark \ref{rem_boundary} below). In particular,
$$
   v(0) = v(1) = 0 \quad\text{and}\quad \d_x v(0) = \d_x v(1) = 0
$$
holds in the sense of generalized numbers.
%(the completion of the space of compactly supported test functions $\vphi$ on $(0,1)$ with respect to the norm
%$\sqrt{\sum_{l= 0}^2 \| \d_x^l \vphi\|_{L^2}^2}$).
%
%Therefore, any representative $(u_\eps)_\eps$ of  $u \in {\cal G}_{H_0^{\infty}((0,1))}$ satisfies $\d_x^l u_\eps(0) = \d_x^l u_\eps(1) = 0$ for all $l\in\N$ and $\eps \in ]0,1]$. In this sense, we have for every $u \in {\cal G}_{H_0^{\infty}((0,1))}$ that the boundary conditions $u |_{x=0} = u |_{x=1} = 0$ hold. In fact, we even have $\d_x^k u |_{x=0} = \d_x^k u |_{x=1} = 0$ for all $k\in\N$.

\item Note  that $L^2$-estimates for parametrized families $u_\eps \in H^\infty(X_T)$ always yield $L^\infty$-estimates of the same qualitative behavior with respect to $\eps$ (since $H^\infty(X_T) \subset C^\infty(\ovl{X_T}) \subset W^{\infty,\infty}(X_T)$).

\end{enumerate}
\end{remark}

% \begin{definition}\label{RestrictionInG}
%  Let $u=[(u_{\eps})_{\eps}]\in {\cal G}_{H^{\infty}(X_T)} $.
%  We define {\it restriction of $u$ to} $\R\times\{0\}$ as follows:
%  Let $(u_{\eps})_\eps$ be representative of $u$. By
% Remark \ref{RemLipshOnSob}  we have $u_{\eps}(\cdot,\cdot)\in
% C^{\infty}(\overline{X_T})$ for each $\eps>0$. Since the restriction map
% $H^{m+1}(X_T) \to H^m(\R^n)$ is continuous we have that
% $(u_{\eps}(\cdot,0))_{\eps}\in {\cal E}_{M,H^{\infty}(\R^n)}$.
% Also, $(u_{\eps}(\cdot,0))_{\eps}\in {\cal N}_{H^{\infty}(\R^n)}$
% if $(u_{\eps})_{\eps}\in {\cal N}_{H^{\infty}(X_T)}$.
% Thus we may define the restriction of $u$ to $\R^n\times \{0\}$ as the class of
% $(u_{\eps}(\cdot,0))_{\eps}$.
% \end{definition}

As described in \cite{BO:92} the space $H^{-\infty}(\R^d)$, i.e.\ distributions of finite order, are embedded (as a linear space) into ${\cal G}_{H^{\infty}(\R^d)}$ by convolution regularization. Furthermore, by this embedding $H^\infty(\R^d)$ becomes a subalgebra of ${\cal G}_{H^{\infty}(\R^d)}$.

  Some generalized functions display so-called macroscopic or distribution aspects:
%  \begin{definition}
   We say that $u =[(u_{\eps})_{\eps}]\in {\cal G}_{H^{\infty}}$ is {\it associated with the
   distribution} $w\in\D'$, denoted by $u \approx w$, if for some (hence any) representative $(u_{\eps})_{\eps}$ of
   $u$ we have $u_{\eps}\to w$ in $\D'$, as $\eps\to 0$.

% \end{definition}

%  \begin{remark}
%   Let $u,v\in \D'(\Om)$. If the model product $[u\cdot v]$ exists, then
%   $ \iota (u)\cdot\iota (v)\approx [u\cdot v]$ (cf. \cite[Proposition 1.4.27]{GKOS:01})
%  \end{remark}

  Intrinsic regularity theory for Colombeau generalized functions has been
  started by defining (in
  \cite{O:92}) the subalgebra $\G^{\infty}$, which satisfies the crucial compatibility property
  $\G^{\infty}\cap \D'= C^{\infty}$. In this sense, it plays the same role for $\G$ as $C^{\infty}$ does for $\D'$. Similarly \cite{Hoermann:03} introduces the subalgebra ${\cal G}^{\infty}_{H^{\infty}}$ of \emph{regular} elements in ${\cal G}_{H^{\infty}}$ by the following condition:
  $u\in{\cal G}_{H^{\infty}}$ belongs to ${\cal G}_{H^{\infty}}^{\infty}$
  if and only if
\beq \label{Col_reg}
   \exists p\geq 0, \forall\al\in\N_0^n:\,
   \|\d^{\al}u_{\eps}\|_{L^2}=O(\eps^{-p}), \text{ as }\eps\to 0.
\eeq
  Note that $p$ can be chosen uniformly with respect to $\al$.

%%%%%%%%%%%%%%% Subsection 1.3 %%%%%%%%%%%%%%%%%%%%%%%%%%

 \subsection{Precise constants in energy estimates from variational methods}

 Let $V,H$ be two complex, separable Hilbert spaces, where $V$ is densely embedded into $H$. Denote the norms in $V$ by $|\cdot|$ and in $H$ by $\|\cdot\|$. Thus, if $V'$ denotes the antidual of $V$, then $V \subset H \subset V'$ forms a Gelfand triple.

Let $a(t,.,.)$, $a_0(t,.,.,)$, and $a_1(t,.,.)$ ($t\in [0,T]$) be (parametrized) families of continuous sesquilinear forms on $V$ with
 \begin{equation}\label{a0a1}
 a(t,u,v)=a_0(t,u,v)+ a_1(t,u,v) \qquad \forall u, v \in V,
 \end{equation}such that the ``principal part'' $a_0$ and the remainder $a_1$ satisfy the following conditions:
\begin{enumerate}
\item for all $u,v \in V$: $t \mapsto a_0(t,u,v)$ is continuously differentiable $[0,T] \to \C$,

\item $a_0$ is Hermitian, i.e.\
     $a_0(t,u,v)= \overline{a_0(t,v,u)}$ for all $u,v\in V$,

\item there exist real constants $\la$ and $\al >0$ such that
\beq \label{coercivity}
    a_0(t,u,u) \geq \al |u|^2 - \la \|u\|^2
       \qquad \forall u\in V,\, \forall t\in [0,T],
\eeq

\item for all $u,v\in V$:  $t \mapsto a_1(t,u,v)$ is continuous in $[0,T] \to
\C$,

\item there is $C_1 \geq 0$ such that for all $t\in [0,T]$ and $u,v\in V$:  $|a_1(t,u,v)|\leq C_1 |u|\, \|v\|$.

\end{enumerate}

Note that writing $a'_0(t,u,v)=\frac{d}{dt} \big(a_0(t,u,v)\big)$ condition (i) implies the following: there exist nonnegative constants $C$ and $C_0$ such that for all $t\in [0,T]$ and $u,v\in V$
\beq \label{i_cons}
  |a_0(t,u,v)| \leq C |u|\,|v| \quad \text{and} \quad
    |a'_0(t,u,v)| \leq C_0 |u| \,|v|.
\eeq

As shown in \cite[Chapter XVIII]{DL:V5} the above conditions  guarantee unique solvability of the abstract variational problem as described by the following Theorem.

\begin{theorem}\label{ThmP1} Let $a(t,.,.)$ ($t \in [0,T]$) satisfy conditions (i)-(v).
Let  $u_0\in V$, $u_1\in H$, and $f\in L^2((0,T),H)$.
Then there exists a unique $u \in L^2((0,T),V)$ satisfying the regularity conditions
\beq \label{var_reg}
 u'= \frac{du}{dt}\in L^2((0,T),V) \quad \text{ and } \quad
   u''=\frac{d^2u}{dt^2}\in L^2((0,T),V')
\eeq
and solving the abstract initial value problem
\begin{align}
    &\dis{u''(t)}{v} + a(t,u(t),v)=\dis{f(t)}{v}
     \qquad \forall v\in V, \, \forall t \in (0,T) \label{var_equ}\\
     & u(0)=u_0,\quad u'(0)=u_1. \label{var_ini}
   \end{align}
(Note that (\ref{var_reg}) implies that $u \in C([0,T],V)$ and $u' \in C([0,T],V')$. Hence it makes sense to evaluate $u(0)\in V$ and $u'(0) \in V'$ and (\ref{var_ini}) claims that these equal $u_0$ and $u_1$, respectively.)
\end{theorem}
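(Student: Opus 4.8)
The plan is to establish Theorem~\ref{ThmP1} by the Faedo--Galerkin method, following \cite[Chapter~XVIII]{DL:V5}, but carrying all constants through explicitly, since it is this quantitative form of the energy estimate that is needed in Sections~2 and~3. \emph{Step 1 (Galerkin approximation).} As $V$ is separable, I would fix a sequence $(w_j)_{j\in\N}\subset V$ that is linearly independent with dense linear span in $V$ (hence in $H$), set $V_m=\mathrm{span}\{w_1,\dots,w_m\}$, and pick $u_{0m},u_{1m}\in V_m$ with $u_{0m}\to u_0$ in $V$ and $u_{1m}\to u_1$ in $H$. One then looks for $u_m(t)=\sum_{j=1}^m g_{jm}(t)w_j$ solving $\dis{u_m''(t)}{w_j}+a(t,u_m(t),w_j)=\dis{f(t)}{w_j}$ for $1\le j\le m$ with $u_m(0)=u_{0m}$, $u_m'(0)=u_{1m}$. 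Since the Gram matrix of $w_1,\dots,w_m$ in $H$ is invertible, this is a linear second-order system of ordinary differential equations for $(g_{jm})_j$ with coefficients that are continuous in $t$ (by (i), (iv)) and right-hand side in $L^2(0,T)$; Carath\'eodory's theorem gives a unique solution on $[0,T]$ once the a priori bound of Step~2 is available.

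\emph{Step 2 (energy estimate with precise constants).} Testing the $m$-th system with $v=u_m'(t)$, which is admissible since $u_m'(t)\in V_m$, and taking real parts, the first term becomes $\tfrac12\tfrac{d}{dt}\|u_m'(t)\|^2$; as $a_0$ is Hermitian and $C^1$ in $t$, $\mathrm{Re}\,a_0(t,u_m,u_m')=\tfrac12\tfrac{d}{dt}a_0(t,u_m,u_m)-\tfrac12 a_0'(t,u_m,u_m)$. Invoking the coercivity \eqref{coercivity}, the bound on $a_0'$ in \eqref{i_cons}, condition (v) for $a_1$ with $2C_1|u_m|\,\|u_m'\|\le\tfrac{\al}{2}|u_m|^2+\tfrac{2C_1^2}{\al}\|u_m'\|^2$, and $\tfrac{d}{dt}\|u_m\|^2\le\|u_m'\|^2+\|u_m\|^2$ to absorb the $-\la\|u_m\|^2$ term, I would arrive at a differential inequality $\tfrac{d}{dt}E_m(t)\le c_1E_m(t)+c_2\|f(t)\|^2$ for the energy $E_m(t)=\|u_m'(t)\|^2+\al|u_m(t)|^2+(|\la|+1)\|u_m(t)\|^2$, with $c_1,c_2$ depending only on $\al,\la,C_0,C_1$ and the embedding constant of $V\hookrightarrow H$. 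Gronwall's lemma then gives
\[
  \sup_{0\le t\le T}\big(\|u_m'(t)\|^2+|u_m(t)|^2\big)\ \le\ K\,\big(\|u_1\|^2+|u_0|^2+\|f\|_{L^2((0,T),H)}^2\big),
\]
with $K=K(T,\al,\la,C_0,C_1)$ explicit --- this is the precise constant to be recorded.

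\emph{Step 3 (passage to the limit, regularity, initial data).} The bound shows $(u_m)$ is bounded in $L^\infty((0,T),V)$ and $(u_m')$ in $L^\infty((0,T),H)$, so along a subsequence $u_m\rightharpoonup u$ weak-$*$ in $L^\infty((0,T),V)$ and $u_m'\rightharpoonup u'$ weak-$*$ in $L^\infty((0,T),H)$, the limit of $u_m'$ being the distributional time-derivative of $u$. Multiplying the $j$-th Galerkin equation by $\varphi\in C_c^\infty((0,T))$, integrating over $(0,T)$, letting $m\to\infty$, and then using density of $\bigcup_m V_m$ in $V$, one sees that $u$ satisfies \eqref{var_equ} in $\D'((0,T))$ for every $v\in V$. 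Rewriting the equation as $u''=f-A(\cdot)u$, where $A(t)\in\mathcal L(V,V')$ is the operator associated with $a(t,\cdot,\cdot)$ and $A(\cdot)u\in L^2((0,T),V')$, gives $u''\in L^2((0,T),V')$ and hence the regularity \eqref{var_reg}; the continuity statements and the precise sense in which the initial data \eqref{var_ini} are attained then follow from the standard arguments of \cite[Chapter~XVIII]{DL:V5} (weak continuity into $V$ resp.\ $H$, combined with regularisation in $t$, after which testing with $\varphi v$, $\varphi\in C^\infty([0,T])$, $\varphi(T)=0$, $\varphi(0)=1$, and integrating by parts twice recovers $u(0)=u_0$ and $u'(0)=u_1$).

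\emph{Step 4 (uniqueness, and the main obstacle).} By linearity it suffices to show that $u_0=u_1=0$ and $f=0$ imply $u=0$. Since $u'(t)$ need not belong to $V$, one cannot test \eqref{var_equ} directly with $u'$, so I would use the classical device: for fixed $s\in(0,T]$ put $v(t)=-\int_t^s u(\sigma)\,d\sigma$ for $0\le t\le s$ and $v(t)=0$ for $s<t\le T$, which lies in $L^2((0,T),V)$ with $v(s)=0$ and $v'=u$ on $(0,s)$. Substituting $v$ in \eqref{var_equ}, integrating over $(0,s)$, integrating by parts in the $u''$- and $a_0$-terms and taking real parts, the Hermiticity of $a_0$ and the bounds \eqref{i_cons}, (v) yield an estimate of the form $\|u(s)\|^2+\al|v(0)|^2\le c\int_0^s(\|u(t)\|^2+|v(0)-v(t)|^2)\,dt$; since $|v(0)-v(t)|\le\int_0^t|u(\sigma)|\,d\sigma$, a Gronwall argument --- first on a short interval, then iterated over $[0,T]$ --- forces $u\equiv0$. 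The main obstacle throughout is making these energy computations rigorous at the regularity actually available: with $u''$ only in $L^2((0,T),V')$, identities such as $\int_0^s\mathrm{Re}\,\dis{u''(t)}{u'(t)}\,dt=\tfrac12\|u'(s)\|^2-\tfrac12\|u'(0)\|^2$ and the attendant trace/continuity statements have to be justified through the Lions regularisation lemma (mollification in $t$), and the uniqueness argument genuinely needs the non-obvious test function above rather than $u'$ itself; tracking the explicit dependence of the constants through all of this is the extra bookkeeping that the later Colombeau arguments will rely on.
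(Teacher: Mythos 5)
The paper itself does not prove Theorem \ref{ThmP1}: it invokes \cite[Chapter XVIII]{DL:V5} (and \cite{LM:72}) for existence via Galerkin approximation, and only reworks the a priori energy estimate with explicit constants in Proposition \ref{propEE}, obtained by taking $v=u'(t)$ in \eqref{var_equ} --- legitimate precisely because \eqref{var_reg} puts $u'(t)\in V$ for a.e.\ $t$ --- which by linearity already settles uniqueness within the stated solution class. Your Faedo--Galerkin outline is essentially the proof of the cited source, so for existence you are on the intended route; but it does not quite prove the statement as written. Your Steps 2--3 bound $|u_m(t)|$ and $\|u_m'(t)\|$, i.e.\ $u_m$ in $V$ and $u_m'$ in $H$, so the weak limit only satisfies $u\in L^\infty((0,T),V)$ and $u'\in L^\infty((0,T),H)$, whereas the theorem asserts $u'\in L^2((0,T),V)$; this stronger regularity is exactly what the paper exploits afterwards (testing with $u'$ in Proposition \ref{propEE}) and it cannot be extracted from the energy bound of your Step 2. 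As written, your construction yields a solution in the standard Dautray--Lions class but not in the class \eqref{var_reg}, and this discrepancy must be addressed (either by proving the extra regularity under further hypotheses, or by noting explicitly that only $u'\in L^2((0,T),H)$ is obtained).

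The uniqueness argument of Step 4 has a further gap in the treatment of $a_1$. Condition (v) reads $|a_1(t,u,v)|\le C_1\,|u|\,\|v\|$ with the $V$-norm on the \emph{first} argument; in your computation that argument is $u(t)$ itself, so the perturbation contributes a term of size $C_1\int_0^s |u(t)|\,\|v(t)\|\,dt$, involving $\int_0^s |u(t)|^2\,dt$ --- finite, but not controlled by the quantities $\|u(t)\|^2$ and $|v(0)-v(t)|^2$ entering your Gronwall inequality (these control the $H$-norm of $u$ and the $V$-norm of its time integral, not the $V$-norm of $u$ pointwise). Hence the displayed estimate ``$\|u(s)\|^2+\alpha|v(0)|^2\le c\int_0^s(\|u(t)\|^2+|v(0)-v(t)|^2)\,dt$'' does not follow as stated; closing it would require, e.g., differentiability of $a_1$ in $t$ or the transposed bound $|a_1(t,u,v)|\le C_1\|u\|\,|v|$, neither of which is assumed (and the latter is unavailable in the intended application $a_1(t,u,v)=\langle b(t)\,\partial_x^2 u,v\rangle$ with $b$ merely $L^\infty$). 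Within the class \eqref{var_reg} this detour is unnecessary anyway: since $u'(t)\in V$ a.e., one tests with $u'$ directly and uniqueness is an immediate corollary of the quantitative estimate of Proposition \ref{propEE} --- which is how the paper organizes the argument and why it remarks that its situation is simpler than that of \cite{DL:V5}.
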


  The prove of this theorem in \cite[Chapter XVIII]{DL:V5} (see also \cite[Chapter III, Section 8]{LM:72}) proceeds along the following lines:
  first, one shows that $u$
  satisfies  a priori (energy) estimates  which immediately imply
  uniqueness of the solution
  and then deduces existence of a solution by a Galerkin approximation.
  To identify the precise dependence of all constants in the a priori estimates we adapt the corresponding part of the proof to our case (which happens to be simpler than in \cite{DL:V5}, whereas \cite{LM:72} does not cover our situation).

  \begin{proposition}\label{propEE}
   Let $u$ be a solution to the abstract variational problem (\ref{var_reg}-\ref{var_ini}), then for all $t \in [0,T]$
   \begin{equation}\label{EE}
   |u(t)|^2 + \|u'(t)\|^2 \leq
    ( D_T \,|u_0|^2 + \|u_1\|^2 + \int_0^t \|f(\tau)\|^2 \,d\tau) \cdot
     \exp(t\, F_T)
  \end{equation}
  where the constants $D_T$ and $F_T$ are given in terms of the constants $C,C_0,C_1, \al, \lambda$ occurring in conditions (iii),(v), and \eqref{i_cons} explicitly by
$$
   D_T = (C + \la (1+T)) /\min(\alpha,1) \quad \text{ and }\quad
   F_T = \max \{C_0 + C_1, C_1+T+2 \} / \min(\alpha,1).
$$

  \end{proposition}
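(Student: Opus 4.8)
The plan is to run the classical energy argument for second order abstract evolution equations (as in \cite[Chapter XVIII]{DL:V5}), but to keep explicit track of every constant. The crucial step is to insert the test function $v = u'(t) \in V$ into the weak equation \eqref{var_equ}, which is legitimate for almost every $t \in (0,T)$ thanks to the regularity \eqref{var_reg}. Upon taking real parts, two of the resulting terms turn into exact time derivatives: on the one hand $\mathrm{Re}\,\dis{u''(t)}{u'(t)} = \tfrac12\tfrac{d}{dt}\|u'(t)\|^2$; on the other hand, since $a_0$ is Hermitian and $t \mapsto a_0(t,\cdot,\cdot)$ is $C^1$ (conditions (i)--(ii)),
\[
   \mathrm{Re}\,a_0(t,u(t),u'(t)) = \tfrac12\tfrac{d}{dt}a_0(t,u(t),u(t)) - \tfrac12 a_0'(t,u(t),u(t)) .
\]
Using the splitting \eqref{a0a1} and integrating over $[0,t]$ then yields the energy identity
\begin{multline*}
   \|u'(t)\|^2 + a_0(t,u(t),u(t)) = \|u_1\|^2 + a_0(0,u_0,u_0) + \int_0^t a_0'(\tau,u(\tau),u(\tau))\,d\tau \\
   - 2\,\mathrm{Re}\!\int_0^t a_1(\tau,u(\tau),u'(\tau))\,d\tau + 2\,\mathrm{Re}\!\int_0^t \dis{f(\tau)}{u'(\tau)}\,d\tau .
\end{multline*}

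Next I would estimate both sides. On the left, the coercivity \eqref{coercivity} gives $a_0(t,u(t),u(t)) \geq \alpha|u(t)|^2 - \lambda\|u(t)\|^2$, so that after moving $\lambda\|u(t)\|^2$ to the right the left side is bounded below by $\min(\alpha,1)\,(|u(t)|^2 + \|u'(t)\|^2)$. On the right, the ``dangerous'' terms are controlled by the a priori bounds of the hypotheses: $|a_0(0,u_0,u_0)| \leq C|u_0|^2$ and $|a_0'(\tau,u,u)| \leq C_0|u|^2$ from \eqref{i_cons}; $|a_1(\tau,u,u')| \leq C_1|u|\,\|u'\| \leq \tfrac{C_1}{2}(|u|^2 + \|u'\|^2)$ from (v); and $2|\dis{f}{u'}| \leq \|f\|^2 + \|u'\|^2$ by Cauchy--Schwarz and Young. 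The term $\lambda\|u(t)\|^2$ is handled by writing $u(t) = u_0 + \int_0^t u'(\tau)\,d\tau$ and using Cauchy--Schwarz together with $V \hookrightarrow H$ to get $\|u(t)\|^2 \leq (1+T)|u_0|^2 + (1+T)\int_0^t\|u'(\tau)\|^2\,d\tau$ (up to the embedding constant). Setting $y(t) := |u(t)|^2 + \|u'(t)\|^2$, collecting all contributions and dividing by $\min(\alpha,1)$ produces an inequality $y(t) \leq A + F_T\int_0^t y(\tau)\,d\tau$ with $A = D_T|u_0|^2 + \|u_1\|^2 + \int_0^t\|f(\tau)\|^2\,d\tau$; Gronwall's lemma then gives \eqref{EE}. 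Matching the values of $D_T$ and $F_T$ in the statement is a matter of bookkeeping: $D_T$ absorbs the $C$ from $a_0(0,u_0,u_0)$ and the factor $\lambda(1+T)$ from the estimate of $\|u(t)\|^2$, while $F_T$ collects $C_0$ and $C_1$ from the $a_0'$- and $a_1$-integrals together with the constants ($T$ and absolute) coming from the Cauchy--Schwarz and Young steps.

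The step I expect to require the most care is the justification of the energy identity itself at the low regularity level \eqref{var_reg}, where $u' \in L^2((0,T),V)$ and $u'' \in L^2((0,T),V')$: the equality $\mathrm{Re}\,\dis{u''(t)}{u'(t)} = \tfrac12\tfrac{d}{dt}\|u'(t)\|^2$ (and the absolute continuity of $t \mapsto \|u'(t)\|^2$) is not formal and relies on the standard regularization lemma for such pairs (see \cite[Chapter XVIII]{DL:V5}, \cite[Chapter III]{LM:72}); alternatively one carries the whole computation out for the finite-dimensional Galerkin approximants, where everything is smooth in $t$, and passes to the limit using weak lower semicontinuity of norms. The second delicate point is purely quantitative---organizing the many applications of Young's inequality so that the resulting constants are exactly the claimed $D_T$ and $F_T$ rather than merely constants of the same qualitative form.
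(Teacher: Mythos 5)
Your proposal follows essentially the same route as the paper's proof: test with $v=u'(t)$, use the Hermitian symmetry and $C^1$-dependence of $a_0$ to obtain the integrated energy identity, bound the right side with \eqref{i_cons}, (v) and Young's inequality, control $\lambda\|u(t)\|^2$ via $u(t)=u_0+\int_0^t u'(\tau)\,d\tau$, and conclude with Gronwall, yielding the stated $D_T$ and $F_T$. The only difference is your explicit remark on justifying $\Re\,\dis{u''(t)}{u'(t)}=\tfrac12\tfrac{d}{dt}\|u'(t)\|^2$ at the regularity level \eqref{var_reg}, which the paper carries out formally; this is a sensible added precaution, not a divergence in method.
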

  \begin{proof}
  We put $v=u'(t)$ in (\ref{var_equ}) and obtain
  \begin{equation}\label{varEq1}
   \dis{u''(t)}{u'(t)} +  a(t,u(t),u'(t)) = \dis{f(t)}{u'(t)}.
  \end{equation}
  By sesquilinearity and (ii) we have
  $$
   \diff{t} \|u'(t)\|^2 = \diff{t} \big(\dis{u'(t)}{u'(t)}\big) =
  \dis{u''(t)}{u'(t)} + \dis{u'(t)}{u''(t)} = 2 \Re(\dis{u''(t)}{u'(t)})
  $$
  and $\overline{a(t,u,v)} = a_0(t,v,u) + \overline{a_1(t,u,v)}$.
Thus taking real parts in (\ref{varEq1}) we obtain
  \begin{align}\label{varEq2}
   \diff{t} \|u'(t)\|^2 & + a_0(t,u'(t),u(t)) +
   a_0(t,u(t),u'(t))\nonumber \\
    + &\overline{a_1(t,u(t),u'(t))} + a_1(t,u(t),u'(t))
   = 2 \Re \dis{f(t)}{u'(t)}.
  \end{align}

For any $v \in H^1((0,T),V)$
 \begin{equation*}
  \diff{t} \big( a_0(t,u(t),v(t)) \big) =
  a_0'(t,u(t),v(t))+ a_0(t,u'(t),v(t)) +
  a_0(t,u(t),v'(t)),
  \end{equation*}
 hence (\ref{varEq2}) implies
  \begin{align}\label{varEq4}
   \diff{t} \Big( \|u'(t)\|^2 +  a_0(t,u(t),u(t))\Big)  =
    - 2 \Re\, a_1(t,u(t),u'(t)) + a_0'(t,u(t),u(t)) + 2 \Re \,\dis{f(t)}{u'(t)}
  \end{align}
Integration (\ref{varEq4}) we obtain
  \begin{align}\label{varEq5}
  LHS(t) := & \|u'(t)\|^2  +  a_0(t,u(t),u(t)) \nonumber \\
  = & \|u'(0)\|^2 + a_0(0,u(0),u(0))
   - 2 \int_0^t \Re\, a_1(\tau,u(\tau),u'(\tau))d\tau \nonumber \\
   & + \int_0^t a_0'(\tau,u(\tau),u(\tau))d\tau + 2 \int_0^t\Re \,\dis{f(\tau)}{u'(\tau)}d\tau =: RHS(t).
  \end{align}
  Further, using the conditions (i) and (v), initial conditions  (\ref{var_ini}) and the inequality $2ab\leq a^2 + b^2$
  we obtain that the right-hand side of (\ref{varEq5}) can
  be estimated in the following way:
  \begin{align}
  |RHS(t)|& \leq  \|u_1\|^2 + C |u_0|^2 + 2C_1\int_0^t |u(\tau)| \, \|u'(\tau)\| \,d\tau +
       C_0\int_0^t |u(\tau)|^2 \, d\tau \nonumber\\
  &\quad + \int_0^t \|f(\tau)\|^2 \,d\tau +
  \int_0^t \|u'(\tau)\|^2d\tau \nonumber \\
  &\leq C\, |u_0|^2 + \|u_1\|^2 +
  (C_0 + C_1)\int_0^t |u(\tau)|^2 \, d\tau
  + \int_0^t \|f(\tau)\|^2\, d\tau +
  (1+ C_1)\int_0^t \|u'(\tau)\|^2\,d\tau.
    \label{upper_bound}
  \end{align}
  Condition (\ref{coercivity}) implies that the left-hand side  of (\ref{varEq5}) has the lower bound
  $$
   LHS(t) \geq \al |u(t)| - \la\, \|u(t)\|^2 + \|u'(t)\|^2,
  $$
%  which implies
%  $\al\|u(t)\| - \la|u(t)|^2 + |u'(t)|^2 \leq
%LHS=RHS\leq |RHS| $
  and therefore
  \begin{equation}\label{varEq6}
   \al \|u(t)\|^2 + |u'(t)|^2 \leq |RHS(t)| + \la |u(t)|^2.
  \end{equation}

We claim that for all $t \in [0,T]$:
  \begin{equation}\label{uodt}
   |u(t)|^2 \leq (1+t)(\|u_0\|^2 + \int_0^t |u'(\tau)|d\tau ).
  \end{equation}
  Indeed, since $ u(t) = u_0 + \int_0^t u'(\tau)d\tau$ and
  $\|u_0\| \leq |u_0|$  we have
  \begin{align*}
   \|u(t)\|^2 &= \dis{u(t)}{u(t)}= \|u_0\|^2 + 2\Re \int_0^t
   \dis{u_0}{u'(\tau)d\tau} +
   \dis{\int_0^tu'(\tau)d\tau}{\int_0^tu'(s)ds}\\
   &\leq \|u_0\|^2 + \int_0^t \|u_0\|^2d\tau +
   \int_0^t \|u'(\tau)\|^2 d\tau + \int_0^t\int_0^t
   |\dis{u'(\tau)}{u'(s)}| ds\, d\tau\\
   &\leq \|u_0\|^2 + \int_0^t \|u_0\|^2d\tau +
   \int_0^t \|u'(\tau)\|^2 d\tau +
   \int_0^t\int_0^t
   \frac{\|u'(\tau)\|^2 + \|u'(s)\|^2}{2} ds\, d\tau\\
   &\leq  \|u_0\|^2 + t\, \|u_0\|^2 +
   \int_0^t \|u'(\tau)\|^2 d\tau +
   t\int_0^t \|u'(\tau)\|d\tau\\
   &\leq (1+t)(\|u_0\|^2 + \int_0^t \|u'(\tau)\|^2 d\tau)
   \leq (1+t)(|u_0|^2 + \int_0^t \|u'(\tau)\|^2 d\tau).
  \end{align*}

 Let $\be:= \min \{\al,1\}$. Combining (\ref{upper_bound}), (\ref{uodt}), and (\ref{varEq6}) we arrive at
 \begin{align*}
   \be (|u(t)|^2 + \|u'(t)\|^2)&\leq
   ( C + \la (1+t)) |u_0|^2 +
   \|u_1\|^2 + \int_0^t \|f(\tau)\|^2 \,d\tau +\\
   &(C_0+C_1)\int_0^t
   |u(\tau)|^2 \,d \tau + (2+t+C_1)\int_0^t
   \|u'(\tau)\| \,d\tau\\
       &\leq (C + \la (1+T)) |u_0|^2 + \|u_1\|^2 +
       \int_0^T \|f(\tau)\|^2 d\tau \\
       &+
       \max \{C_0 + C_1, C_1+T+2 \}
       \int_0^t (|u(\tau)|^2 + \|u'(\tau)\|^2)d\tau.
 \end{align*}
Dividing by $\be>0$ yields
 $$
 \Phi(t)\leq D_T \, |u_0|^2 + \|u_1\|^2 +
 \int_0^T \|f(\tau)\|^2 d\tau +
 F_T \int_0^t \Phi(\tau)d\tau,
 $$
 where $\Phi(t)= |u(t)|^2 + \|u'(t)\|^2$ (and $D_T = (C + \la (1+T)) /\beta$ and $F_T = \max \{C_0 + C_1, C_1+T+2 \} / \beta$ as in the statement of the Proposition).
 Gronwall's lemma now implies that
 $$
  \Phi(t) \leq \big( D_T\, |u_0|^2 + \|u_1\|^2 + \int_0^T
 \|f(\tau)\|^2d\tau \big) \cdot \exp(t\, F_T).
 $$
 \ \hfill \end{proof}

%%%%%%%%%%%%%% Section 2 %%%%%%%%%%%%%%%%%%%%%%%%
%%%%%%%%%%%%%%%%%%%%%%%%%%%%%%%%%%%%%%%%%%%%%%%%%

 \section{Weak solutions for $\mathbf{L^\infty}$-coefficients}

  Let $H := L^2((0,1))$ with the standard scalar product
   $\dis{u}{v}=\int_0^1u(x)\overline{v(x)}dx$  and $L^2$-norm
   denoted by $\|\cdot\|$. Let $V$ be the Sobolev space
   $H^2_0((0,1))$, which is the completion of the space
   $C^\infty_c((0,1))$ (compactly supported smooth functions) with respect to the norm
$\|u\|_{2}
 % = ( \sum_{|\al|\leq m}\int_{\Om} |D^{\al}u(x)|^2dx)^{\frac{1}{2}}
  = ( \sum_{k= 0}^2 \|u^{(k)}\|^2)^{1/2}$ (and inner product $(u,v) \mapsto \sum_{k= 0}^2 \dis{u^{(k)}}{v^{(k)}}$).
  Then  $V'= H^{-2}((0,1))$, which consists of distributional derivatives  up to second order of functions in $L^2((0,1))$,
  and  $V\hookrightarrow H \hookrightarrow V' $
   forms a Gelfand triple.

Let $c \in L^\infty((0,1))$ be real-valued and $b \in C([0,T],L^\infty((0,1)))$.
 For $t \in [0,T]$ we define the sesquilinear forms $a(t,\cdot,\cdot)$, $a_0(t,\cdot,\cdot)$, and $a_1(t,\cdot,\cdot)$ on $V\times V$ by
  \begin{equation}\label{sesforma}
   a_0(t,u,v) = \dis{c \, \d_x^2u}{\d_x^2v}, \quad
   a_1(t,u,v) = \dis{b(t)\d_x^2u}{v}
   \quad (u,v\in V)
  \end{equation}
and
\beq \label{sesform2}
   a(t,u,v) = a_0(t,u,v) + a_1(t,u,v).
\eeq
Clearly, $a_0$ is Hermitian (since $c$ is real).

Note that the $L^\infty$-properties of $c$ and $b$ are necessary in order to have the above sesquilinear forms defined and continuous on all of $V$. In the application to our model (\ref{P}) the functions $c$ and $b$ play the role of the coefficients (with the abuse of notation in identifying $b(t)(x)$ with $b(x,t)$) and hence the required $L^\infty$-condition is satisfied for $c$ but typically not for $b$,
which we want to allow to be a measure or more general distribution. This poses the main difficulty in the analysis of the model. Our strategy will be to regularize the problem by smoothing of the coefficients and data and then analyze the corresponding family of solutions and show that they constitute a generalized solution.

As a preparatory result, we need precise statements and estimates for the solutions to the regularized problems. In fact, for the basic result we will have to assume considerably less regularity than actually needed in the theory applied later on. The advantage is that it illustrates how far we could go by classical functional analytic methods concerning lowest possible regularity of the coefficients.
At this stage we make the hypotheses
  \begin{equation}\label{HypothesisOn_c_and_b}
   c\in L^{\infty}(0,1),\quad b\in C([0,T],L^{\infty})
  \end{equation}
and (motivated by the specification of the bending stiffness in Subsection 1.1) that there exist $c_1 > c_0 > 0$ such that
 \begin{equation}\label{addHypothesisOn_c}
   0 < c_0\leq c(x)\leq c_1 \qquad \text{(for almost every $x$)}.
  \end{equation}

% We will prove existence and uniqueness of solution in the following sense:
%  \begin{definition}\label{defsolPDE}
%  Let $g\in L^2((0,1)\times [0,T]),\,f_1\in H_0^2((0,1)),f_2 \in L^2([0,1])
%  $.   Then\\
%  $u\in L^2([0,T], H_0^2((0,1)))\cap\, C^1([0,T],\D'((0,1)))$
%  is said to be a weak solution to (\ref{P}) if\\
%  1.)  $(\ref{P})_1$ is
%   satisfied in the weak sense i.e.
%   $$\inp{u}{\d^2_t\varphi + \d^2_x(c(x)\d^2_x)\varphi + b(x,t)\d_x^2 \varphi}
%     = \inp{f}{\varphi} ,
%   \quad \forall \varphi \in\D((0,T)\times (0,1)),
%   $$
%   where $\inp{\cdot}{\cdot}$ denotes inner product in $ L^2([0,T]\times [0,1])$\\
%  2.) $u(0,x)=f_1$ and $\frac{d}{dt}u(0,x)=f_2$.
%  \end{definition}

\begin{remark}{{\bf (About boundary conditions)}}\label{rem_boundary}
Note that any solution of the partial differential equation and the initial conditions in \eqref{P} with $u(.,t) \in H^2_0((0,1))$ for all $t \in [0,T]$ automatically implements
the boundary conditions of (\ref{P}) in the following sense:  $H^2_0((0,1))$ is continuously embedded in the subspace of $C^1([0,1])$ of functions with vanishing values and derivatives at the boundary (\cite[Corollary 6.2]{Wloka:87}); hence we must have $u(0,t) = u(1,t) = 0$ as well as $\d_x u(0,t) = \d_x u(1,t) = 0$.
%Second, although in general point values of $\d_x^2 u(.,t)$ will not make sense we still have, due to \cite[Theorem 3.7]{Wloka:87}, that
%$$
%  H^2_0((0,1)) = \{ w \in H^2(\R) :
%     \supp(w) \subseteq [0,1]\}.
%$$
%Thus, if in addition $u$ is a classical solution (i.e.\ twice differentiable), then we must also have
%$\d_x^2 u(0,t) = \d_x^2 u(1,t) = 0$. \todo{True only for differentiability on $\R$; otherwise any function behaving  like $x^2$ for small $x >0$ gives a contradiction!}
\end{remark}

\begin{theorem}\label{ThmClassic}
Let $b,c$ be as in (\ref{HypothesisOn_c_and_b}) and (\ref{addHypothesisOn_c}) and the sesquilinear forms $a(t,.,.)$ (for $t \in [0,T]$) be defined by (\ref{sesforma}) and (\ref{sesform2}).
If $f_1\in H_0^2((0,1))$, $f_2\in L^2((0,1))$,
 and $g\in L^2((0,T), L^2(0,1))$, then
 there exists a unique $u\in L^2((0,T),H^2_0((0,1)))$
 satisfying
\beq \label{sol_reg}
    u' = \diff{t}u \in L^2((0,T),H^2_0((0,1))),\qquad
    u'' = \frac{d^2}{dt^2}u\in L^2((0,T),H^{-2}((0,1)))
\eeq
 and solving the initial value problem
 \begin{align}
   &\dis{u''(t)}{v} + a(t,u(t),v) = \dis{g(t)}{v}, \quad \quad \forall  v\in H^2_0((0,1)),\; t \in (0,T),\label{vf}\\
   & u(0)=f_1,\quad u'(0) = f_2. \label{vf_ini}
 \end{align}
(Regarding the precise meaning of the initial conditions (\ref{vf_ini}) we refer to the corresponding remark in Theorem \ref{ThmP1}.)
 \end{theorem}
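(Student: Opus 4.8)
The plan is to read \eqref{vf}--\eqref{vf_ini} as the abstract variational problem \eqref{var_equ}--\eqref{var_ini} over the Gelfand triple $V = H^2_0((0,1)) \hookrightarrow H = L^2((0,1)) \hookrightarrow V' = H^{-2}((0,1))$ introduced in Section 2, with the decomposition $a = a_0 + a_1$ from \eqref{sesforma}--\eqref{sesform2} and data $u_0 = f_1 \in V$, $u_1 = f_2 \in H$, $f = g \in L^2((0,T),H)$. Once conditions (i)--(v) of Subsection 1.3 are verified, Theorem \ref{ThmP1} yields a unique $u \in L^2((0,T),V)$ with $u' \in L^2((0,T),V)$ and $u'' \in L^2((0,T),V')$ solving \eqref{vf}--\eqref{vf_ini}, which is exactly the assertion together with the regularity \eqref{sol_reg}; Proposition \ref{propEE} then supplies the explicit energy estimate to be invoked in Section 3. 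Since any such $u$ satisfies $u(\cdot,t) \in H^2_0((0,1))$, Remark \ref{rem_boundary} takes care of the boundary conditions in \eqref{P}. Thus the whole proof reduces to checking the five structural hypotheses and reading off the constants $C, C_0, C_1, \alpha, \lambda$.

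Conditions (i) and (ii) are essentially free. Since $a_0(t,u,v) = \dis{c\,\d_x^2 u}{\d_x^2 v}$ does not depend on $t$, the map $t \mapsto a_0(t,u,v)$ is $C^1$ with $a_0' \equiv 0$, so one may take $C_0 = 0$; and $a_0$ is Hermitian because $c$ is real-valued (as already remarked after \eqref{sesform2}). Continuity of $a_0$ is the Cauchy--Schwarz bound $|a_0(t,u,v)| \leq \|c\|_{L^\infty}\,\|\d_x^2 u\|\,\|\d_x^2 v\| \leq \|c\|_{L^\infty}\,\|u\|_2\,\|v\|_2$, giving $C = \|c\|_{L^\infty} \leq c_1$.

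The only step requiring an actual argument is the coercivity (iii). Hypothesis \eqref{addHypothesisOn_c} gives $a_0(t,u,u) = \int_0^1 c\,|\d_x^2 u|^2\,dx \geq c_0\,\|\d_x^2 u\|^2$, so it remains to dominate the full norm $\|u\|_2^2 = \|u\|^2 + \|u'\|^2 + \|u''\|^2$ by a multiple of $\|\d_x^2 u\|^2$ on $V = H^2_0((0,1))$. This is the classical Poincar\'e inequality applied twice: since both $u$ and $u'$ vanish at $x = 0,1$ one has $\|u'\| \leq C_P\,\|u''\|$ and $\|u\| \leq C_P\,\|u'\| \leq C_P^2\,\|u''\|$, hence $\|u\|_2^2 \leq (1 + C_P^2 + C_P^4)\,\|\d_x^2 u\|^2$. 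So (iii) holds with $\lambda = 0$ and $\alpha = c_0/(1 + C_P^2 + C_P^4) > 0$; in particular $\min(\alpha,1) > 0$, as needed by Proposition \ref{propEE}.

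Finally, conditions (iv) and (v) concern $a_1(t,u,v) = \dis{b(t)\,\d_x^2 u}{v}$. Continuity of $t \mapsto a_1(t,u,v)$ follows from $b \in C([0,T],L^\infty)$ via $|a_1(t,u,v) - a_1(s,u,v)| \leq \|b(t) - b(s)\|_{L^\infty}\,\|\d_x^2 u\|\,\|v\|$, and (v) from $|a_1(t,u,v)| \leq \|b(t)\|_{L^\infty}\,\|\d_x^2 u\|\,\|v\| \leq C_1\,\|u\|_2\,\|v\|$ with $C_1 := \max_{t\in[0,T]}\|b(t)\|_{L^\infty} < \infty$. It is worth stressing that this bound has precisely the asymmetric form demanded by (v) --- the $V$-norm enters only through $u$ and the $H$-norm only through $v$ --- which is the analytic manifestation of the fact that the axial term $b\,\d_x^2 u$ is genuinely of lower order relative to the biharmonic principal part $\d_x^2(c\,\d_x^2 u)$. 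With (i)--(v) in hand, Theorem \ref{ThmP1} applies directly and produces the solution with the stated properties; the meaning of the initial conditions \eqref{vf_ini} is as explained in Theorem \ref{ThmP1}. I expect no genuine obstacle here: all of the real work is already contained in Theorem \ref{ThmP1} and Proposition \ref{propEE}, and the only non-formal point at this level is the coercivity, i.e.\ the equivalence of the seminorm $u \mapsto \|\d_x^2 u\|$ with the $H^2$-norm on $H^2_0((0,1))$, which rests on the clamped boundary conditions built into $V$.
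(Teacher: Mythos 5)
Your proposal is correct and follows the same overall route as the paper: recast \eqref{vf}--\eqref{vf_ini} as the abstract variational problem over the Gelfand triple $H^2_0((0,1)) \hookrightarrow L^2((0,1)) \hookrightarrow H^{-2}((0,1))$, verify hypotheses (i)--(v) of Theorem \ref{ThmP1} for the decomposition \eqref{sesforma}--\eqref{sesform2}, and read off the constants feeding into Proposition \ref{propEE}. The one point where you genuinely diverge is the coercivity condition (iii): the paper invokes Ehrling's lemma, $\|u\|_1^2 \leq \tfrac12 \|u\|_2^2 + C_{1/2}\|u\|^2$, and obtains \eqref{coercivity} with $\alpha = c_0/2$ and $\lambda = C_{1/2}\,c_0 > 0$, whereas you apply the Poincar\'e inequality twice (legitimate, since both $u$ and $\d_x u$ vanish at the endpoints for $u \in H^2_0((0,1))$) to get $\|u\|_2^2 \leq (1 + C_P^2 + C_P^4)\,\|\d_x^2 u\|^2$ and hence (iii) with $\lambda = 0$ and $\alpha = c_0/(1 + C_P^2 + C_P^4)$. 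Your variant is more elementary (no compactness/Ehrling needed) and slightly sharper, since it exhibits $a_0$ as genuinely $V$-coercive with an explicit constant; the paper's Ehrling route is more robust, as it does not rely on the equivalence of the seminorm $\|\d_x^2 \cdot\|$ with the full $H^2$-norm and would survive under boundary conditions or higher-dimensional settings where that equivalence fails. For the later Colombeau analysis either choice serves equally well, because all that Section 3 uses is that $\alpha$ and $\lambda$ are independent of $b$ and of the regularization parameter $\eps$, which both provide (only the explicit values recorded in Remark \ref{rem_const_distr_sol}(i) would change to $\alpha = c_0/(1+C_P^2+C_P^4)$, $\lambda = 0$). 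The remaining verifications --- (i), (ii) with $C = \|c\|_{L^\infty}$ and $C_0 = 0$, and (iv), (v) with $C_1 = \max_{t}\|b(t)\|_{L^\infty}$, where you correctly keep the asymmetric $V$--$H$ form required in (v) --- coincide with the paper's.
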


  \begin{proof}
  We show that the sesquilinear form $a$ satisfies all hypotheses of Theorem \ref{ThmP1}.
  First, for all $u, v \in H^2_0((0,1))$ we clearly have that $a_0$ is Hermitian (since $c$ is real) and continuously differentiable  (by independence of $t$) and that $a_1$ is continuous with respect to $t$ and satisfy the estimates
\begin{align*}
  |a_0(t,u,v)| &\leq
     \|c\|_{L^{\infty}((0,1))}\|D^2u\|\|D^2v\|
     \leq C \, \|u\|_2\|v\|_2 \\
   |a_1(t,u,v)| &\leq
  \|b\|_{L^{\infty}((0,1)\times(0,T))}\|D^2u\|\|v\|
  \leq C_1 \, \|u\|_2\|v\|_2,
\end{align*}
  where $C = \|c\|_{L^{\infty}((0,1))}$ and $C_1 = \|b\|_{L^{\infty}((0,1)\times(0,T))}$. Hence conditions (i), (ii), (iv), and (v) are met.

To prove property (iii) we appeal to Ehrling's
  lemma (\cite[Theorem 7.4]{Wloka:87}
  %\cite[Corollary 4, pg.134]{DL:V2}
  or \cite[Chapter 2, Proposition 2.11]{CP:82})
  which implies that for each  $\del > 0$  there exists
  a real constant $C_{\del}$ such that for all $u\in H^2_0((0,1))$
\begin{equation}\label{poe}
  \|u\|^2_1 \leq \del\, \|u\|_{2}^2 + C_{\del}\, \|u\|^2.
  \end{equation}
Choosing $\del = 1/2$ and employing
%(\ref{HypothesisOn_c_and_b}) and
  (\ref{addHypothesisOn_c})
  we obtain for all $u\in H^2_0((0,1))$
\begin{multline*}
 a_0(t,u,u)= \dis{c\, \d_x^2u}{\d_x^2u}\geq
     c_0 \| \d_x^2u\|^2 =
     c_0 \|u\|^2_2 - c_0 \, \|u\|_1^2  \\
     \geq c_0 \|u\|^2_2 -
     c_0(\frac{1}{2} \|u\|_2^2 + C_{1/2}\|u\|^2)
     \geq \frac{c_0}{2} \|u\|^2_2 -
     c_0 C_{1/2} \|u\|^2 \\ =
     \al \|u\|^2_2 - \la \|u\|^2,
\end{multline*}
  where $\al = c_0 /2$ and $\la =  C_{1/2} c_0$. Thus we have shown also (iii).
\end{proof}

\begin{remark}\label{rem_const_distr_sol}
\begin{enumerate}
\item
 For later reference we give the precise dependence of
 all constants appearing in the energy
 estimate (\ref{EE})
$$
  \|u(t)\|_2^2 + \|u'(t)\|^2 \leq
    ( D_T \,\|u_0\|_2^2 + \|u_1\|^2 + \int_0^t \|g(\tau)\|^2 \,d\tau) \cdot  \exp(t\, F_T)
$$
on the coefficients $b$ and $c$. We recall that
$$
   D_T = (C + \la (1+T)) /\min(\alpha,1) \quad \text{ and }\quad
   F_T = \max \{C_0 + C_1, C_1+T+2 \} / \min(\alpha,1),
$$
where we now have
\begin{align*}
  C & = \|c\|_{L^{\infty}((0,1))},\quad
  C_0  = 0,\quad
  C_1 = \|b\|_{L^{\infty}((0,1)\times(0,T))}\\
  \al &= \frac{c_0}{2},\quad
  \la =  C_{1/2}\, c_0.
\end{align*}
\item According to (\ref{sol_reg}) the solution $u$ belongs to $C^1([0,T],H^{-2}((0,1))) \hookrightarrow \D'((0,1)\times(0,T))$ and in case of smooth coefficients $b$ and $c$ is a
distributional solution to the partial differential equation
$$
   \d_t^2 u + \d^2_x(c \,\d^2_x)u + b \,\d_x^2 u = g
   \qquad \text{in } \D'((0,1)\times(0,T)).
$$
Moreover, if $u_0$, $u_1$, and $g$ are smooth, then $u$ is a classical smooth solution to the partial differential equation. (We prove this in course of the proof of Theorem \ref{main_thm} below; cf.\ Remark \ref{after_main_thm}(iii).)

\item In view of the statement in (ii) it might be interesting to note that the partial differential operator in our model does not fall into any of the standard types of linear differential operators of distribution theory. To make this precise it suffices to consider the case of constant coefficients $c > 0$ and $b \in \R$. Thus we have the operator
$$
  P = \d_t^2 + c\, \d_x^4 + b\, \d_x^2
$$
with symbol
$p(\xi,\tau) = - \tau^2 + c\, \xi^4 - b\, \xi^2$
and principal symbol
$p_2(\xi,\tau) = c\, \xi^4$.
Then the  following holds (for clarity, with slight logical redundance in the statements):
\begin{enumerate}
\item Clearly, $P$ is not elliptic (\cite[p.\ 176]{DL:V2}), since $\Char(P) = \{ (\xi,\tau) \in
\R^2 \setminus\{0\} : p_2(\xi,\tau) = 0\} = \{(0,\tau) : \tau\not=0\}$ is not empty.

\item $P$ is not parabolic (with respect to forward time, i.e.\ with respect to $H = {t \geq 0}$; \cite[pp. 202-203, Corollary 2]{DL:V2}), since with $\xi > 0$
sufficiently large such that $\tau(\xi) := c\, \xi^4 - b\,  \xi^2 > 0$ we have $|\xi| + |\tau(\xi)| \to \infty$ ($\xi \to
\infty$), while $p(\tau(\xi) \cdot (0,1) + i(\xi,0)) = 0$.
(Note that, in contrast, $\d_t + \d_x^4 - \d_x^2$ is parabolic.)

\item $P$ is not hyperbolic (with respect to forward time; \cite[pp. 190-193, Prop. 16]{DL:V2}), since $(0,1) \in \Char(P)$.

\item Moreover, $P$ is not hypoelliptic (thus cannot be parabolic or elliptic), since the equivalent condition \cite[(4.11), p.\ 233]{DL:V2} can be shown to be false (cf.\ \cite[Theorem 1 on p.233 below]{DL:V2}). With $k = (0,1)$ (i.e.\ the partial $\tau$-derivative) and $\xi > 0$
sufficiently large to ensure $\tau(\xi) := c\, \xi^4 - b\,  \xi^2 > 1$ we obtain $|\d_{\tau} p (\xi,\tau(\xi)) | = 2 \tau(\xi) \to
\infty$ (as $\xi \to \infty$), whereas $p(\xi,\tau(\xi)) = 0$ for large $\xi > 0$.

\item Of course, $P$ is also not of Schr\"odinger type (in the sense of \cite[pp.\ 620]{DL:V5}). However, if $b = 0$ one can write $P = (\d_t - i \sqrt{c}\, \d_x^2)\cdot(\d_t + i \sqrt{c}\, \d_x^2)$, which is a product of two
Schr\"odinger operators.

\end{enumerate}

\end{enumerate}
\end{remark}
%%%%%%%%%%%%%%%%%%%%%%%%%%%%%%%
%%%%%%%%%%%%COLOMBO%%%%%%%%%%%%%
%%%%%%%%%%%%%%%%%%%%%%%%%%%%%%%
%%%%%%%%%%%%%%%%%%%%%%%%%%%%%%%%%%%%%%%%%%%%%%%%%%%%%%%%%%%%%%
\section{Colombeau generalized solutions}

We now establish existence and uniqueness of a
 generalized solution to problem (\ref{P}), where the coefficients $b,c$ as well as the data
  $f_1,f_2$ and $g$ are Colombeau generalized functions.
This means that we find a unique solution $u \in \G_{H^\infty(X_T)}$ to the partial differential equation
$$
  \d^2_t u + Q(t,x,\d_x) u =  g
    \qquad \text{on } X_T = (0,1)\times(0,T),
$$
 where $g \in \G_{H^\infty(X_T)}$  and $Q$ denotes the differential operator on $\G_{H^\infty(X_T)}$ which acts on representatives by
$$
 (u_{\eps})_{\eps} \mapsto
   (\d^2_x(c(x)\d^2_x(u_{\eps}))_\eps  +
     b(x,t)\d_x^2(u_{\eps}))_{\eps} =: Q_\eps u_\eps,
$$
with initial conditions
$$
  u|_{t=0} = f_1 \in \G_{H^\infty((0,1))}, \quad
  \d_tu|_{t=0} = f_2 \in \G_{H^\infty((0,1))}.
$$
Recall that the initial conditions are to be understood in the sense of Remark \ref{rem_restriction}(i).

Furthermore, the boundary conditions read
 $$
  u|_{x=0}=u|_{x=1}=0,\quad \d_x u|_{x=0} =
  \d_x u|_{x=1}=0.
$$
Thanks to Remark \ref{rem_restriction}(ii) these are automatically satisfied if we can show that for all $t \in [0,T]$ we have $u_\eps(t) \in H^2_0((0,1))$ for some representative $(u_\eps)_\eps$ of $u$.

As in the classical specification with we have to impose a compatibility condition concering initial and boundary values
\beq \label{compatibility}
   f_1(0) = f_1(1) = 0
\eeq
(which is an equation in generalized numbers). We observe that for any $f_1 \in \G_{H^\infty((0,1))}$ condition \eqref{compatibility} implies that there exists a representative $(f_{1,\eps})_\eps$ of $f_1$ such that $f_{1,\eps} \in H^2_0((0,1))$ for all $\eps \in \,]0,1[$. (If $(\bar{f}_\eps)_\eps$ is an arbitrary representative, put $n_\eps := \bar{f}_{1,\eps}(0)$, $m_\eps := \bar{f}_{1,\eps}(1)$ and consider the new representative $f_{1,\eps}(x) := \bar{f}_{1,\eps}(x) - n_\eps - (m_\eps - n_\eps)\, x$ instead.)

\subsection{Existence and uniqueness}

As in Section 2 we impose a condition on $c$, which is motivated by the intended properties of the bending stiffness: There exist real constants $c_1 > c_0 > 0$ such that $c\in {\cal G}_{H^{\infty}(0,1))}$
possesses a representative $(c_{\eps})_\eps$ satisfying
\beq \label{Col_stiff}
  0 < c_0 \leq c_{\eps}(x) \leq c_1
   \qquad \forall x\in (0,1), \forall \eps \in\, ]0,1].
\eeq
(Note that then any other representative
$(\widetilde{c_{\eps}})_\eps$ of $c$ satisfies $\frac{c_0}{2}\leq \widetilde{c_{\eps}}(x)\leq c_1+1$ for all $x \in (0,1)$ and  $0 < \eps < \eps_0$ with some $\eps_0 \in\, ]0,1]$.)
A weaker condition on $c$ is briefly discussed in Remark \ref{after_main_thm}(i) below.

As a technical assumption we will also require that $b$ is of $L^\infty$-log-type (similar to \cite{O:89}), which means that we have for some (hence any) representative $(b_\eps)_\eps$ of $b$ there exist $N\in\N$ and $\eps_0 \in ]0,1]$ such that
\beq \label{log_type}
  \| b_\eps \|_{L^\infty(X_T)}
    \leq N\cdot \log(\frac{1}{\eps})
      \qquad (0 < \eps \leq \eps_0).
\eeq
As observed in \cite[Proposition 1.5]{O:89} log-type regularizations of distributions are easily obtained via convolution with lograithmically scaled mollifiers.

 \begin{theorem}\label{main_thm}
  Let $b\in {\cal G}_{H^{\infty}(X_T)}$ be of $L^\infty$-log-type and $c\in {\cal G}_{H^{\infty}(0,1))}$ satisfy \eqref{Col_stiff}.
 For any  $f_{1} \in {\cal G}_{H^{\infty}((0,1))}$ satisfying \eqref{compatibility}, $f_{2}\in {\cal G}_{H^{\infty}((0,1))}$, and
 $g\in {\cal G}_{H^{\infty}(X_T)}$
there is a unique solution $u \in {\cal G}_{H^{\infty}(X_T)}$
to the initial-boundary value problem (\ref{P}).
 \end{theorem}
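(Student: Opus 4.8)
The plan is to follow the standard regularize--estimate--pass-to-quotient strategy for Colombeau solutions. First I would fix representatives: choose $(c_\eps)_\eps$ satisfying \eqref{Col_stiff}, $(b_\eps)_\eps$ of $L^\infty$-log-type as in \eqref{log_type}, a representative $(f_{1,\eps})_\eps$ of $f_1$ with $f_{1,\eps}\in H_0^2((0,1))$ for all $\eps$ (available by the remark following \eqref{compatibility}), and arbitrary moderate representatives $(f_{2,\eps})_\eps$ and $(g_\eps)_\eps$. For each fixed $\eps$ the coefficients $c_\eps,b_\eps$ satisfy the hypotheses \eqref{HypothesisOn_c_and_b}--\eqref{addHypothesisOn_c} of Theorem \ref{ThmClassic} (with $c_0$ replaced by $c_0/2$ if needed), so there is a unique weak solution $u_\eps\in L^2((0,T),H_0^2((0,1)))$ with $u_\eps'\in L^2((0,T),H_0^2)$ and $u_\eps''\in L^2((0,T),H^{-2})$ to the regularized problem with data $f_{1,\eps},f_{2,\eps},g_\eps$. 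Since the data are in $H^\infty$, a bootstrap argument (differentiating the equation in $t$, using the equation to trade $t$-regularity for $x$-regularity via $\d_x^2(c_\eps\,\d_x^2 u_\eps) = g_\eps - \d_t^2 u_\eps - b_\eps\,\d_x^2 u_\eps$ and elliptic regularity in $x$ for the operator $\d_x^2(c_\eps\cdot\,\d_x^2)$ on $H_0^2$) shows $u_\eps\in H^\infty(X_T)$; this is also where Remark \ref{rem_const_distr_sol}(ii) and the smoothness claims get proved. Defining $u:=[(u_\eps)_\eps]$ will give the candidate, and the boundary conditions hold by Remark \ref{rem_restriction}(ii) since $u_\eps(t)\in H_0^2$.

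The core of the argument is \textbf{moderateness} of $(u_\eps)_\eps$. The basic $L^2((0,1))$-in-$x$, sup-in-$t$ bounds on $u_\eps$ and $\d_t u_\eps$ come directly from the energy estimate in Remark \ref{rem_const_distr_sol}(i): one has $\|u_\eps(t)\|_2^2 + \|u_\eps'(t)\|^2 \le (D_{T,\eps}\|f_{1,\eps}\|_2^2 + \|f_{2,\eps}\|^2 + \int_0^T\|g_\eps(\tau)\|^2 d\tau)\exp(T F_{T,\eps})$, where $C=\|c_\eps\|_\infty\le c_1$, $C_0=0$, $\al=c_0/4$, $\la = C_{1/2}c_0/2$ are $\eps$-uniformly bounded, while $C_1=\|b_\eps\|_{L^\infty(X_T)} = O(\log(1/\eps))$ by the log-type hypothesis. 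Hence $D_{T,\eps}=O(\log(1/\eps))$ and $F_{T,\eps}=O(\log(1/\eps))$, so $\exp(TF_{T,\eps}) = O(\eps^{-TN/\min(\cdot)})$, i.e.\ polynomial in $1/\eps$ — this is precisely why the log-type assumption on $b$ is needed. Together with moderateness of the data this yields $\|u_\eps(t)\|_2 = O(\eps^{-p})$ and $\|\d_t u_\eps(t)\|_{L^2((0,1))} = O(\eps^{-p})$ uniformly in $t$, hence $\|u_\eps\|_{L^2(X_T)}, \|\d_x u_\eps\|_{L^2(X_T)}, \|\d_x^2 u_\eps\|_{L^2(X_T)}, \|\d_t u_\eps\|_{L^2(X_T)}$ are all $O(\eps^{-p})$. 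Higher-order derivatives are handled by differentiating the equation: $\d_t^{k+2} u_\eps$ is controlled from $\d_t^k$ of $(g_\eps - Q_\eps u_\eps)$ (using the product structure of $Q_\eps$, the moderate bounds on $\d^\beta c_\eps$ and $\d^\beta b_\eps$, and inductively on the lower-order derivatives of $u_\eps$), and mixed/pure $x$-derivatives by applying energy estimates to the problems solved by $\d_t^k u_\eps$ with initial data obtained recursively from the equation at $t=0$, plus elliptic regularity in $x$. An induction on the total order of the derivative, with the loss at each step being a fixed power of $\eps$, gives moderateness.

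For \textbf{well-definedness} (independence of the chosen representatives) and \textbf{uniqueness}, which are really the same computation: if $(\tilde u_\eps)_\eps$ solves the problem with data differing from $(f_{1,\eps},f_{2,\eps},g_\eps)$ by null families and coefficients differing by null families, then the difference $w_\eps := u_\eps - \tilde u_\eps$ solves a regularized problem of the same type with right-hand side and initial data that are null, plus an extra forcing term coming from the difference of coefficients applied to $\tilde u_\eps$ (of the form $(\tilde c_\eps - c_\eps)$ and $(\tilde b_\eps - b_\eps)$ times moderate derivatives of $\tilde u_\eps$), which is again null since null times moderate is null. Applying the energy estimate of Remark \ref{rem_const_distr_sol}(i) to $w_\eps$, the exponential prefactor is still only polynomially large (same log-type bound), while the data-dependent bracket is $O(\eps^q)$ for every $q$; hence $\|w_\eps(t)\|_2 = O(\eps^q)$ for all $q$, uniformly in $t$. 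The same $t$-differentiation bootstrap applied to $w_\eps$ then upgrades this to $O(\eps^q)$ estimates for all derivatives, so $(w_\eps)_\eps\in\mathcal N_{H^\infty(X_T)}$. This simultaneously shows $u$ does not depend on the representatives and that any two generalized solutions coincide.

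I expect the main obstacle to be the moderateness bootstrap for the higher derivatives: one must carefully organize the induction so that the $\eps$-power losses do not accumulate uncatchably and so that the recursively-defined initial data for $\d_t^k u_\eps$ at $t=0$ (obtained by solving the equation for $\d_t^2 u_\eps(0)$ in terms of $f_{1,\eps},f_{2,\eps},g_\eps(0)$ and iterating) stay in $H_0^2$ with moderate norms — this uses that $f_{1,\eps}\in H_0^2$ together with elliptic regularity for $\d_x^2(c_\eps\,\d_x^2\cdot)$ and the moderate bounds on all $x$-derivatives of $c_\eps$ and $b_\eps$. The analytically delicate point underlying everything is the interplay between the $\log(1/\eps)$ growth of $\|b_\eps\|_\infty$ and the $\exp(TF_T)$ factor in the energy estimate, which is exactly the reason the $L^\infty$-log-type hypothesis appears in the statement.
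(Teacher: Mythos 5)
Your overall strategy coincides with the paper's: fix representatives, solve each regularized problem by Theorem \ref{ThmClassic}, feed the constants of Remark \ref{rem_const_distr_sol}(i) into the energy estimate of Proposition \ref{propEE} so that $D_T^\varepsilon=O(1)$ (by \eqref{Col_stiff}) and $\exp(T F_T^\varepsilon)$ is only polynomially large in $1/\varepsilon$ (by the $L^\infty$-log-type of $b$), then bootstrap by differentiating in $t$ and use the equation together with $1/c_\varepsilon$ to control the spatial derivatives; uniqueness follows from the same estimate with null data. (Minor economy you missed: for uniqueness it is enough to obtain the zeroth-order rapid decay $\|u_\varepsilon\|_{L^2(X_T)}=O(\varepsilon^q)$, since among moderate families this already characterizes null families, as recalled in Subsection 1.2; the extra bootstrap of null estimates to all derivatives is unnecessary.)

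There is, however, one step that fails as you stated it: you require the recursively defined initial data for $\partial_t^k u_\varepsilon$ to lie in $H^2_0((0,1))$. Already for $k=1$ the first initial datum is $\tilde f_{1,\varepsilon}=f_{2,\varepsilon}$, and no compatibility condition is imposed on $f_2$, so it need not vanish (nor have vanishing derivative) at the boundary; likewise $\tilde f_{2,\varepsilon}=g_\varepsilon(\cdot,0)-Q_\varepsilon f_{1,\varepsilon}$ has no reason to belong to $H^2_0$. Hence the differentiated problems cannot be posed in the Gelfand triple $H^2_0\subset L^2\subset H^{-2}$, and your plan of keeping the data in $H^2_0$ via elliptic regularity does not repair this. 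The paper's fix is simply to pose the problems for $\partial_t^k u_\varepsilon$ with $V=H^2((0,1))$ and $H=L^2((0,1))$, which is still a Gelfand triple; coercivity (iii) again follows from Ehrling's lemma for $H^2\subset\subset H^1\subset L^2$, and the energy estimate holds with exactly the same constants $D_T^\varepsilon$, $F_T^\varepsilon$, so the induction in $l$ (for $k=0,1,2$) goes through. A second, smaller point: for the moderateness of $\partial_x^k u_\varepsilon$ with $k\geq 3$ the paper does not invoke abstract elliptic regularity, whose constants you would then have to track in $\varepsilon$; exploiting the single space dimension, it integrates $\partial_x^2(c_\varepsilon\,\partial_x^2u_\varepsilon)=h_\varepsilon$ twice in $x$, determines the two integration functions by evaluation at $x=0$ and $x=1$, and divides by $c_\varepsilon$, obtaining the explicit formula \eqref{h_rep} from which the moderate bounds are read off directly. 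Your quantitative elliptic-regularity variant can be made to work, but only after verifying that the elliptic constants depend at most moderately on $c_\varepsilon$ and its derivatives, which is precisely what the explicit formula renders transparent.
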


\begin{proof} {\bf Existence:}\enspace \enspace
Let $(b_\eps)_\eps$ represent $b$ and $(c_{\eps})_\eps$ be a representative of $c$ satisfying \eqref{Col_stiff}. Denote by  $(f_{1\eps})_\eps$, $(f_{2\eps})_\eps$, and $(g_{\eps})_\eps$ representatives of $f_1$,$f_2$, and $g$, respectively. In addition, we may assume that $f_{1,\eps} \in H^2_0((0,1))$ for all $\eps \in \,]0,1[$ (see the discussion following \eqref{compatibility}).

Let $\eps \in\, ]0,1]$ be arbitrary. By  Theorem \ref{ThmClassic} and Remark \ref{rem_const_distr_sol}(ii) we obtain a unique function
$u_{\eps}\in H^1((0,T),H^2_0((0,1)))
\cap H^2((0,T),H^{-2}((0,1)))$ which solves
 \begin{align}\label{Peps}
  P_{\eps}u_{\eps} &: = \d^2_tu_{\eps} +
    Q_{\eps}(t,x,\d_x)u_{\eps} = g_{\eps}
      \qquad \text{on } X_T,\\
 &u_{\eps}|_{t=0} = f_{1\eps},
   \quad \d_tu_{\eps}|_{t=0} = f_{2\eps}.\nonumber
\end{align}
In particular, we have $u_{\eps}\in C^1([0,T],H^{-2}((0,1))) \cap
  C([0,T],H^2_0((0,1)))$.
 %with
%  $\d_t u_{\eps}\in L^2((0,T),H^2_0((0,1)))$ and
% $\d^2_t u_{\eps}\in L^2((0,T),H^{-2}((0,1)))$.

Moreover, from Proposition \ref{propEE} and Remark \ref{rem_const_distr_sol}(i) we deduce the energy estimate
  \begin{equation}\label{EEeps1}
   \|u_{\eps}(t)\|_2^2 + \|u_{\eps}'(t)\|^2 \leq
    \big( D_T^\eps\, \|f_{1\eps}\|_2^2 + \|f_{2,\eps}\|^2
    + \int_0^t \|g_{\eps}(\tau)\|^2\,d\tau \big)
    \cdot \exp(t\, F_T^\eps),
  \end{equation}
where with some $N$ we have for small $\eps >0$
\begin{align}
  D_T^\eps &= (\|c_\eps\|_{L^{\infty}} +
    \la (1+T)) /\min(\alpha,1)
    = O(\|c_\eps\|_{L^{\infty}}) = O(1)
      \label{const_D_eps} \\
  F_T^\eps &= \max \{\|b_\eps\|_{L^{\infty}},
  \|b_\eps\|_{L^{\infty}} +T+2 \} / \min(\alpha,1)
  = O(\|b_\eps\|_{L^{\infty}})
  = O(\log(\eps^{-N})), \label{const_F_eps}
\end{align}
since $\al = c_0/2$ and $\la = C_{1/2}\, c_0$ are independent of $\eps$.
Therefore moderateness of the initial conditions $f_{1\eps}$, $f_{2\eps}$ and of $g_{\eps}$ in (\ref{EEeps1})
  yields that there exists $M$ such that for
  small $\eps > 0$
  \begin{equation}\label{EEeps}
  \|u_{\eps}\|^2_{L^2(X_T)} +
  \|\d_x u_{\eps}\|^2_{L^2(X_T)}
  + \| \d_x^2 u_{\eps}\|^2_{L^2(X_T)}
  + \|\d_t u_{\eps}\|^2_{L^2(X_T)} =
  O (\eps^{-M})\quad (\eps\to 0).
  \end{equation}

We will proceed to show that the family $(u_{\eps})_{\eps}$ belongs to ${\cal E}_{H^{\infty}(X_T)}$. Then by construction its class $u$ in ${\cal G}_{H^{\infty}(X_T)}$ defines a solution to the initial value problem. It remains to prove the following properties:
 \begin{itemize}
  \item[1.)]  For all $\eps \in\, ]0,1]$ the function
  $u_{\eps }$ is smooth, i.e.\ $u_{\eps}\in C^{\infty}(X_T)$.
  \item[2.)] Moderateness, i.e.\ for all $l,k\in\N$ there is some $M\in\N$ such that for small $\eps > 0$
\beq \tag{$T_{l,k}$} \label{T_lk} \qquad
  \|\d^l_t\d^k_xu_{\eps}\|_{L^2(X_T)} = O(\eps^{-M}).
\eeq
 Note that \eqref{EEeps} already yields \eqref{T_lk} for $(l,k) \in \{ (0,0),(1,0),(0,1),(0,2)\}$.
 \end{itemize}
%\ \\[1mm]

 \noindent\emph{Step 1:}  Differentiating (\ref{Peps}) (considered as an equation in $\D'((0,1)\times(0,T))$) with respect to $t$ we obtain
 $$
 P_{\eps}(\d_tu_{\eps}) = \d_tg_{\eps} - \d_t b(x,t)\d_x^2u_{\eps} =: \tilde{g}_\eps \in H^1((0,T),L^2((0,1))),
 $$
 since
 $\d_tg_{\eps}\in H^{\infty}(X_T)$,
 $\d_t b(x,t)\in H^{\infty}(X_T) \subset W^{\infty,\infty}(X_T)$
 and  $\d^2_xu_{\eps}\in H^1((0,T),L^2((0,1)))$.
Furthermore, since $Q_\eps$ depends smoothly on $t$ as a differential operator in $x$ and $u_\eps(0) = f_{1,\eps} \in H^{\infty}((0,1))$ we have
\begin{align*}
    (\d_t u_{\eps})(.,0) & = f_{2,\eps} =:
      \tilde{f}_{1,\eps} \in H^{\infty}((0,1)),\\
   (\d_t(\d_tu_{\eps}))(.,0) & =
   g_{\eps}(.,0) - Q_{\eps}(u_{\eps}(.,0)) =
     g_{\eps}(.,0) - Q_{\eps} f_{1,\eps} :=
       \tilde{f}_{2,\eps} \in H^{\infty}((0,1)).
\end{align*}
Hence $\d_tu_{\eps}$ satisfies an initial value problem for the partial differential operator $P_\eps$ as in \eqref{Peps} with initial data
  $\tilde{f}_{1,\eps}$, $\tilde{f}_{2,\eps}$ and right-hand side $\tilde{g}_{\eps}$ instead. However, this time we have to use $V = H^2((0,1))$ (replacing $H^2_0((0,1))$) and $H = L^2((0,1))$ in the abstract setting, which still can serve to define a Gelfand triple $V\hookrightarrow H \hookrightarrow V'$ (cf.\ \cite[Theorem 17.4(b)]{Wloka:87}) and thus allows for application of Theorem \ref{ThmP1} and the energy estimate \eqref{EE} (with precisely the same constants).

 Therefore we obtain
 $\d_tu_{\eps}\in H^1([0,T],H^2((0,1)))$,
  i.e.\ $u_{\eps}\in H^2((0,T),H^2((0,1)))$ and from the
  variants of \eqref{EEeps1} (with exactly the same constants $D_T^\eps$ and $F_T^\eps$) and \eqref{EEeps} with $\d_t u_\eps$ in place of $u_\eps$ that for some $M$ we have
 \begin{equation}
   \| \d_t u_{\eps}\|^2_{L^2(X_T)} +
  \|\d_x \d_t u_{\eps}\|^2_{L^2(X_T)}
  + \| \d_x^2 \d_t u_{\eps}\|^2_{L^2(X_T)}
  + \|\d_t^2 u_{\eps}\|^2_{L^2(X_T)} =
  O (\eps^{-M})\quad (\eps\to 0).
  \end{equation}
 Thus we have proved \eqref{T_lk} with $(l,k) = (2,0), (1,1), (1,2)$ in addition to those obtained from \eqref{EEeps} directly.

Differentiating (\ref{Peps}) a second time with respect to $t$ we obtain again an initial value problem for the same operator $P_\eps$ with solution $\d_t^2 u_\eps$: this time, the right-hand side reads
$$
   \tilde{\tilde{g}}_\eps :=
  \d_t^2 g_{\eps} - \d_t^2 b(x,t) \d_x^2u_{\eps}
  - \d_t b(x,t) \d_x^2\d_t u_{\eps}
$$
and the initial values are $\tilde{\tilde{f}}_{1,\eps}:=
   (\d_t^2 u_{\eps})(.,0) = \tilde{f}_{2,\eps}$ and
$$
    \tilde{\tilde{f}}_{2,\eps} :=
   (\d_t(\d_t^2 u_{\eps}))(.,0)  =
   \d_t g_{\eps}(.,0) - Q_{\eps}(\tilde{f}_{1,\eps})
   - \d_t b_\eps(.,0)\, \d_x^2 {f}_{1,\eps}.
$$
Note that the corresponding energy estimate involves again precisely the same constants $D_T^\eps$ and $F_T^\eps$ as in  \eqref{EEeps1} and we only have to insert the norms of the new initial values and right-hand side accordingly. Thus we obtain $u_{\eps}\in H^3((0,T),H^2((0,1)))$ and $L^2$-norm estimates proving \eqref{T_lk} also for the new pairs $(l,k) = (3,0),(2,1),(2,2)$ in addition.

Upon deriving a succession of similar initial value problems for $\d_t^3 u_\eps$, $\d_t^4 u_\eps$ etc.\ we arrive at
  \begin{equation}\label{smoothnes_and_reg_po_t}
   u_{\eps}\in H^{\infty}((0,T),H^2((0,1)))
  \end{equation}
and
\beq \label{T_all_times}
  \text{\eqref{T_lk} holds for all } l\in\N \text{ and } k=0,1,2.
\eeq

Using this information in the original equation (\ref{Peps}) gives
 $$
  Q_{\eps}u_{\eps} = g_{\eps} - \d_t^2 u_{\eps}\in
  H^{\infty}((0,T),H^2((0,1)).
 $$
 Since $Q_{\eps}$ is an elliptic operator of order $4$ (in the $x$-variable),  (\ref{smoothnes_and_reg_po_t}) and elliptic regularity implies
 $u_{\eps}\in H^{\infty}((0,T),H^6((0,1)))$. Using this improved regularity property in turn  we obtain by a
successive application of elliptic regularity
 \begin{equation*}
  u_{\eps}\in H^{\infty}(X_T)
   \subseteq C^{\infty}(\overline{X_T}).
 \end{equation*}
Thus, requirement 1.) above is proved.

It remains to show the moderateness property 2.). Due to \eqref{T_all_times} the statement is already verified for  derivatives of arbitrary orders with respect to time and up to order $2$ with respect to $x$.

\noindent\emph{Step 2:} From the results of Step 1 and equation \eqref{Peps} we deduce that
$$
  h_\eps := \d_x^2(c_\eps\, \d_x^2 u_\eps) = g_\eps -
     b_\eps\, \d_x^2 u_\eps - \d_t^2 u_\eps
$$
satisfies for all $l \in \N$ with some $N_l$ an estimate
\begin{equation}\label{mod22}
 \| \d_t^l  h_\eps \|_{L^2(X_T)}
    = O(\eps^{-N_l}) \quad (\eps\to 0).
 \end{equation}

Integrating the equation $0 = \d_x^2(c_{\eps}\d_x^2 u_{\eps}) - h_{\eps}$ twice with respect to $x$ gives
 \begin{equation}\label{mod2}
  e_{\eps}(t) + d_{\eps}(t) \, x =
     c_{\eps}(x) \, \d^2_xu_{\eps}(x,t) -
     \int_0^x\int_0^y h_{\eps}(z,t)dz dy.
 \end{equation}

Evaluating this equation at $x=0$ and $x = 1$
we obtain  $e_{\eps}(t) = c_\eps(0)\, \d_x^2 u_\eps(0,t)$ and $d_{\eps}(t) =  c_\eps(1)\, \d_x^2 u_\eps(1,t) - \int_0^1\int_0^y h_{\eps}(z,t)dz dy$. Thus \eqref{T_all_times} and \eqref{mod22} show that $(e_\eps)_\eps$ and $(d_\eps)_\eps$ belong to $\E_{M,H^\infty((0,1))}$.

Equation (\ref{mod2}) yields the following formula
for all $l,k \in \N, k \geq 2$:
\beq \label{h_rep}
 \d_t^l \d^{k}_x u_{\eps}(x,t) = \d_x^{k-2}
 \left(\frac{\d_t^l e_\eps(t) + \d_t^l d_{\eps}(t)\, x}{ c_{\eps}(x)} +
   \frac{1}{ c_{\eps}(x)}
  \int_0^x\int_0^y \d_t^l h_{\eps}(z,t)dz dy\right).
\eeq
Using the fact that $\frac{1}{ c_{\eps}}$
 is moderate (by boundedness away from $0$) we immediately obtain from \eqref{mod22} and \eqref{h_rep}  the moderateness estimates for $\|\d^l_t\d^k_xu_{\eps}\|_{L^2(X_T)}$ with $k=3,4$ as well ($l$ arbitrary), thus \eqref{T_lk} holds for all $l\in\N$ and $k=0,\ldots,4$.

Applying now \eqref{T_lk} with $k$ up to $4$  in (appropriate derivatives of) the defining equation for $h_\eps$ above we obtain the following improvement of \eqref{mod22}
$$
\forall l\in\N, 0 \leq p \leq 2,\ \exists N_{lp}: \quad
 \| \d_x^p \d_t^l  h_\eps \|_{L^2(X_T)}
    = O(\eps^{-N_{lp}}) \quad (\eps\to 0).
$$
Using this in turn in \eqref{h_rep} implies \eqref{T_lk} for $k$ up to order $6$. Successively proceeding in this way, we conclude that \eqref{T_lk} holds for all $l,k \in \N$. Thus property 2.) is shown and therefore $u = [(u_\eps)_\eps]$ defines a solution $u \in \G_{H^\infty(X_T)}$ to the initial value problem.

By construction we have $u_\eps(t) \in H^2_0((0,1))$ for all $t \in [0,T]$, hence $u(0,t) = u(1,t) = 0$ and $\d_x u(0,t) = \d_x u(1,t) = 0$ and $u$ satisfies the boundary conditions too.

{\bf Uniqueness:}\enspace \enspace  Assume that
$u = [(u_\eps)_\eps]$ satisfies initial-boundary value problem with zero initial values and right-hand side. Then we have for all $q\geq 0$
$$
   \|f_{1,\eps}\| = O(\eps^q),
   \|f_{2,\eps}\| = O(\eps^q),
   \|g_{\eps}\|_{L^2(X_T)} = O(\eps^q) \quad (\eps\to 0)
$$
and the energy estimate (\ref{EEeps1}) together with (\ref{const_D_eps}-\ref{const_F_eps}) imply for all $q \geq 0$ an estimate
$$
    \|u_\eps\|_{L^2(X_T)} = O(\eps^q) \quad (\eps \to 0).
$$
Therefore $(u_\eps)_\eps \in  {\cal N}_{H^{\infty}(X_T)}$ which proves that $u = 0$.
 \end{proof}

 \begin{remark}\label{after_main_thm}
 \begin{enumerate}
\item The above proof is valid without change if we replace  condition \eqref{Col_stiff} on $c \in \G_{H^\infty((0,1))}$ by the considerably
 weaker condition that $c$ is \emph{strtictly positive}, i.e.\ there exists $q \geq 0$ and $\eps_0 > 0$ such that
\beq \tag{\ref{Col_stiff}'}
  c_\eps \geq \eps^q \qquad (0 < \eps < \eps_0)
\eeq
(and no extra boundedness from above [other than the standard  moderateness]), since
this clearly implies that $1/c$ is a generalized function (cf.\ also \cite[Theorem 1.2.5]{GKOS:01}). Therefore, the existence and uniqueness result is extended to cases with stiffness that may be infinitely close to $0$ and need not be bounded from above.

\item In Step 2 of the proof we took advantage of having only   one spatial dimension. However, for a variant in higher space dimensions basic conclusions similar to those we drew from  formula \eqref{h_rep} could be reached by pseudo-differential techniques as in \cite{GGO:05} with the help of a Colombeau generalized parametrix of the operator $v \mapsto \d_x^2(c\, \d_x^2 v)$.

\item We point out that along the way in the proof of Theorem \ref{main_thm} (namely in Step 1) we have proved the following \emph{regularity result} for the weak solution:

If in addition to the hypotheses of Theorem \ref{ThmClassic} the coefficients $b$ and $c$ as well the data $f_1$, $f_2$, and $g$ are smooth, then the unique solution $u$ is also smooth, more precisely $u_{\eps}\in H^{\infty}(X_T)
   \subseteq C^{\infty}(\overline{X_T})$.

\end{enumerate}
 \end{remark}

\subsection{Regularity and coherence with smooth and weak solutions}

As a first simple comparison results displaying an enjoyable consistent relationship between classical smooth solutions, weak solutions, and generalized solutions  we consider the case of $C^\infty$ coefficients.

\begin{proposition} Assume in addition to the hypotheses of Theorem \ref{main_thm} that the coefficients $b$ and $c$ are (Colombeau classes of) $C^\infty$ functions.
\begin{enumerate}
\item If the data $f_1$, $f_2$, and $g$ are (the Colombeau classes of) smooth functions as well, then the unique generalized solution $u \in  \G_{H^\infty(X_T)}$ to the initital-boundary value problem \eqref{P} equals the (embedding of the) unique smooth solution in $H^{\infty}(X_T)$.

\item Let $f_1 = [(f_{1,\eps})_\eps]$, $f_2 = [(f_{2,\eps})_\eps]$, and $g = [(g_{\eps})_\eps]$ and assume that we have the following convergence properties of regularized data as $\eps \to 0$:  $f_{1,\eps} \to \tilde{f_1}$ in $H^2_0((0,1))$, $f_{2,\eps} \to \tilde{f_2}$ in $L^2((0,1))$, and $g_{\eps} \to \tilde{g}$ in $L^2(X_T)$. Then the unique generalized solution $u \in  \G_{H^\infty(X_T)}$ is associated with the unique weak solution $w \in L^2((0,T),H^2_0((0,1)))$ according to Theorem \ref{ThmClassic} with initial values $\tilde{f_1}$, $\tilde{f_2}$, and right-hand side $\tilde{g}$.

\end{enumerate}
\end{proposition}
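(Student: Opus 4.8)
\emph{Plan.} I would derive both statements from the \emph{uniform} a priori estimate of Proposition~\ref{propEE} (equivalently Remark~\ref{rem_const_distr_sol}(i)). The decisive preliminary step is to represent the \emph{smooth} coefficients $b$ and $c$ by the \emph{$\eps$-independent} families $b_\eps:=b$, $c_\eps:=c$. These are admissible representatives of the given Colombeau classes ($b_\eps$ is trivially of $L^\infty$-log-type and $c_\eps$ satisfies \eqref{Col_stiff}, so all hypotheses of Theorem~\ref{main_thm} persist; cf.\ also Remark~\ref{after_main_thm}(i)), and with this choice the operator $P_\eps$ of \eqref{Peps} and the sesquilinear forms $a(t,\cdot,\cdot)$ underlying Theorem~\ref{ThmClassic} do not depend on $\eps$. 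Hence \emph{all} constants of \eqref{EE} --- in particular $D_T$ and $F_T$ --- may be fixed once and for all, uniformly in $\eps$; this is exactly what turns the a priori estimate into (rapid, resp.\ plain) convergence of the regularized solutions.

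\emph{Part (i).} First I would secure the smooth comparison solution: by Remark~\ref{after_main_thm}(iii) the unique weak solution $U$ of \eqref{vf}--\eqref{vf_ini} with the smooth data $f_1,f_2,g$ is smooth, $U\in H^\infty(X_T)\subseteq C^\infty(\overline{X_T})$; since the variational problem is posed over $V=H^2_0((0,1))$ one has $U(t),U'(t)\in H^2_0((0,1))$ for every $t$, so $U$ lies in the class \eqref{sol_reg} and $(U)_\eps$ is moderate. Next I would run the construction in the proof of Theorem~\ref{main_thm} with the representatives $b_\eps=b$, $c_\eps=c$, $f_{2,\eps}:=f_2$, $g_\eps:=g$, and a representative $(f_{1,\eps})_\eps$ of $f_1$ with $f_{1,\eps}\in H^2_0((0,1))$ (available by the hypotheses of Theorem~\ref{main_thm}). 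Because the coefficients are $\eps$-independent, $u_\eps$ is simply the weak solution of \eqref{vf}--\eqref{vf_ini} for the data $(f_{1,\eps},f_2,g)$; by linearity and uniqueness, $r_\eps:=u_\eps-U$ solves the same problem with vanishing right-hand side and initial data $(f_{1,\eps}-f_1,0)$, so Remark~\ref{rem_const_distr_sol}(i) gives
\[
  \sup_{t\in[0,T]}\big(\|r_\eps(t)\|_2^2+\|r_\eps'(t)\|^2\big)\;\le\;D_T\,e^{T F_T}\,\|f_{1,\eps}-f_1\|_2^2 .
\]
Since $f_1$ is smooth, $(f_{1,\eps})_\eps$ and the constant net $(f_1)_\eps$ represent the same class, hence $(f_{1,\eps}-f_1)_\eps$ is a null family and $\|f_{1,\eps}-f_1\|_2=O(\eps^q)$ for all $q$ (its derivatives decaying as fast as the function); combined with the display this forces $\|r_\eps\|_{L^2(X_T)}=O(\eps^q)$ for all $q$. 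As $(u_\eps)_\eps$ and $(U)_\eps$ are moderate, so is $(r_\eps)_\eps$, whence $(r_\eps)_\eps\in{\cal N}_{H^{\infty}(X_T)}$ and $u=[(u_\eps)_\eps]=[(U)_\eps]$; by the subalgebra embedding $H^\infty(X_T)\hookrightarrow\G_{H^\infty(X_T)}$ this is exactly the embedding of the smooth solution $U$. (If moreover $f_1\in H^2_0((0,1))$, one may simply take $f_{1,\eps}=f_1$ and read off $u_\eps=U$ at once.)

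\emph{Part (ii).} I would construct $u$ using $b_\eps=b$, $c_\eps=c$ together with the prescribed representatives $(f_{1,\eps})_\eps,(f_{2,\eps})_\eps,(g_\eps)_\eps$ (the generalized solution not depending on this choice). Theorem~\ref{ThmClassic}, applied with the limiting data $\tilde f_1\in H^2_0((0,1))$, $\tilde f_2\in L^2((0,1))$, $\tilde g\in L^2(X_T)$ and the $L^\infty$-coefficients $b,c$ (recall $0<c_0\le c\le c_1$), provides the unique weak solution $w\in L^2((0,T),H^2_0((0,1)))$ of \eqref{vf}--\eqref{vf_ini} with regularity \eqref{sol_reg}. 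By linearity $r_\eps:=u_\eps-w$ is the weak solution for right-hand side $g_\eps-\tilde g$ and initial data $(f_{1,\eps}-\tilde f_1,\,f_{2,\eps}-\tilde f_2)$, so Remark~\ref{rem_const_distr_sol}(i) yields
\[
  \sup_{t\in[0,T]}\big(\|r_\eps(t)\|_2^2+\|r_\eps'(t)\|^2\big)\;\le\;e^{T F_T}\Big(D_T\,\|f_{1,\eps}-\tilde f_1\|_2^2+\|f_{2,\eps}-\tilde f_2\|^2+\|g_\eps-\tilde g\|_{L^2(X_T)}^2\Big),
\]
whose right-hand side tends to $0$ by the assumed convergences in $H^2_0((0,1))$, $L^2((0,1))$ and $L^2(X_T)$. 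Hence $\|u_\eps-w\|_{L^2(X_T)}^2\le T\sup_{t}\|r_\eps(t)\|_2^2\to0$, so $u_\eps\to w$ in $L^2(X_T)$ and a fortiori in $\D'(X_T)$; since association is independent of the chosen representative, $u\approx w$.

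\emph{Main obstacle.} I do not expect a hard analytic step: once the smooth coefficients are represented $\eps$-independently, everything reduces to the classical continuous dependence of the linear equation on its data through Proposition~\ref{propEE}. The only points demanding care are precisely this reduction --- justifying the $\eps$-independent representatives so that $D_T$ and $F_T$ become uniform --- and, in part~(i), recognizing that the difference family $u_\eps-U$ is a genuine \emph{null} family (which is immediate from the definition once its moderateness, as a difference of moderate families, is noted). The remainder is bookkeeping of representatives and of the $H^\infty$-embedding.
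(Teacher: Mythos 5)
Your proposal is correct, and it is instructive to compare it with the paper's much shorter proof. For part (i) you essentially reproduce the paper's argument: the paper simply sets $u_\eps := v$ (the smooth solution, smooth by Remark~\ref{after_main_thm}(iii)) and invokes uniqueness of the Colombeau solution, which is exactly your closing parenthetical remark; your main text takes a detour through the energy estimate to show that $u_\eps - U$ is a null family, which is sound but redundant once one notes (as you do) that the constant family is itself an admissible choice of representatives, and note that both arguments implicitly use $f_1\in H^2_0((0,1))$ so that the comparison solution exists at all --- an assumption the paper makes tacitly as well. For part (ii) your route genuinely differs: the paper cites the abstract continuity theorem of Dautray--Lions (continuous dependence of the variational solution on data), whereas you give a self-contained quantitative argument by applying Proposition~\ref{propEE} (via Remark~\ref{rem_const_distr_sol}(i)) to the difference $r_\eps = u_\eps - w$, after the key observation that smooth coefficients may be represented $\eps$-independently so that $D_T$ and $F_T$ are uniform in $\eps$ (without this, the $O(\log(1/\eps))$ growth of $F_T^\eps$ in Theorem~\ref{main_thm} would only give moderate bounds, not convergence). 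Your version buys something extra: it yields convergence of $u_\eps$ to $w$ in $\sup_{t}\|\cdot\|_2$ and of $u_\eps'$ to $w'$ in $\sup_t\|\cdot\|$, which is stronger than the mere distributional association asserted, and it reuses machinery already established in the paper instead of an external reference; the paper's citation is of course shorter.
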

\begin{proof} (i) Put $u_\eps := v$ for all $\eps$, where $v$ denotes the unique classical smooth solution (smoothness of $v$ is due to Remark \ref{after_main_thm}(iii)). From uniqueness of the Colombeau solution $u$ we must have $u = [(v)_\eps]$.\\[1mm]
(ii) This assertion follows immediately from the abstract theorem \cite[Chapter XVIII, Section 5, Subsection 4.2, Theorem 2]{DL:V5} on continuity of the solution with respect to the initial values and the right-hand side.
\end{proof}

Finally, we investigate intrinsic regularity of the Colombeau generalized solution $u$ in the sense of \eqref{Col_reg}. i.e.\ in terms of uniform $\eps$-asymptotics for all derivatives. In the context of  regularity theory for (pseudo-) differential equations a key property required of the coefficients (or symbols) has been shown to involve the notion of so-called \emph{slow scale nets} (\cite{GGO:05,HO:04,HOP:05}): A net $(r_\eps)_{\eps \in ]0,1]}$ of complex numbers is said to be of slow scale, if
$$
  \forall p \in \N: \quad
     |r_\eps|^p = O(\eps^{-1}) \quad (\eps \to 0).
$$
Note that any product of finitely many slow scale nets is of slow scale.
A generalized function $v = [(v_\eps)_\eps] \in \G_{H^\infty}$ is said to be of slow scale in all derivatives, if the net $(\|\d^\alpha  v_\eps\|_{L^2})_\eps$ is of slow scale for all $\alpha$.

Assuming slow scale conditions on the coefficients we show that regularity of the initial values and the right-hand side is preserved in the solution. This is a generalization of the regularity result for weak solutions described in Remark \ref{after_main_thm}(iii).

\begin{theorem} Let $b\in {\cal G}_{H^{\infty}(X_T)}$  and $c\in {\cal G}_{H^{\infty}(0,1))}$ be of slow scale in all derivatives. Assume further that $c$ satisfies \eqref{Col_stiff} and that $b$ is of slow scale $L^\infty$-log-type, i.e.\  for some (hence any) representative $(b_\eps)_\eps$ of $b$ there exist a slow scale net $(r_\eps)_\eps$ with $r_\eps \geq 1$ and $\eps_0 \in\, ]0,1]$ such that
\beq \label{sc_log_type} \tag{\ref{log_type}'}
  \| b_\eps \|_{L^\infty(X_T)} \leq
     \log(r_{\eps})
      \qquad (0 < \eps \leq \eps_0).
\eeq
If $f_1$ is as in Theorem \ref{main_thm} and in addition $f_1, f_2 \in \G^\infty_{H^\infty((0,1))}$, and $g\in \G^\infty_{H^\infty(X_T)}$, then the unique Colombeau solution $u$ to the initial-boundary value problem \eqref{P} belongs to $\G^\infty_{H^\infty(X_T)}$.
\end{theorem}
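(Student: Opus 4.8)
The plan is to re-run the existence proof of Theorem \ref{main_thm}, this time tracking \emph{uniform} $\eps$-exponents rather than mere polynomial moderateness, and to show that the slow scale hypotheses force a single exponent $p \geq 0$ to work for all derivatives, as required by \eqref{Col_reg}. First I would recall that by uniqueness the Colombeau solution $u$ is represented by the family $(u_\eps)_\eps$ of weak solutions to \eqref{Peps}, so it suffices to prove that $(u_\eps)_\eps$ satisfies the $\G^\infty_{H^\infty(X_T)}$-estimate. The starting point is the energy estimate \eqref{EEeps1} with constants $D_T^\eps$ and $F_T^\eps$: under \eqref{sc_log_type} we have $F_T^\eps = O(\|b_\eps\|_{L^\infty}) = O(\log r_\eps)$, hence $\exp(T\, F_T^\eps) \leq \exp(T\log r_\eps) = r_\eps^T$, which is a (fixed power of a) slow scale net, and $D_T^\eps = O(\|c_\eps\|_{L^\infty}) = O(1)$ by \eqref{Col_stiff}. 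Since $f_1, f_2 \in \G^\infty$ and $g \in \G^\infty$, their norms on the right-hand side are $O(\eps^{-p_0})$ for a \emph{single} $p_0$; multiplying by the slow scale factor $r_\eps^T = O(\eps^{-1})^T = O(\eps^{-T})$ (rounding $T$ up to an integer) still gives a single exponent. Thus $\|u_\eps\|_2^2 + \|u_\eps'\|^2 = O(\eps^{-p_1})$ with $p_1$ independent of anything, yielding the uniform bound for the derivative multi-indices coming directly from \eqref{EEeps}.

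Next I would revisit Step 1 of the proof of Theorem \ref{main_thm}: there $\d_t u_\eps$ solves the same operator equation with right-hand side $\tilde g_\eps = \d_t g_\eps - \d_t b_\eps\, \d_x^2 u_\eps$ and new initial data $\tilde f_{1,\eps} = f_{2,\eps}$, $\tilde f_{2,\eps} = g_\eps(\cdot,0) - Q_\eps f_{1,\eps}$. The crucial point is that the energy estimate uses \emph{exactly the same} constants $D_T^\eps$, $F_T^\eps$, so the slow scale factor $r_\eps^T$ never accumulates in the exponent; it only ever appears to the fixed power $T$. The norms entering the right-hand side now involve $\d_t g_\eps$, $\d_t b_\eps \cdot \d_x^2 u_\eps$, $Q_\eps f_{1,\eps}$, etc. — but $b$, $c$ are slow scale in all derivatives, $g$, $f_1$, $f_2 \in \G^\infty$, and a product of finitely many slow scale nets is again slow scale; so each such term is bounded by a single-exponent net times a slow scale net, which is again single-exponent. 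Iterating (as in the original proof) through $\d_t^2 u_\eps$, $\d_t^3 u_\eps$, \dots one obtains \eqref{T_lk} for all $l$ and $k \leq 2$ with \emph{one} exponent $p_2$, since at each stage we only pick up one more factor of the (fixed power of a) slow scale net from $\exp(T F_T^\eps)$ and finitely many slow scale derivative-factors of $b$ and $c$.

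Then I would redo Step 2 with the same bookkeeping. The key formula \eqref{h_rep} expresses $\d_t^l \d_x^k u_\eps$ ($k \geq 2$) in terms of $\d_t^l e_\eps$, $\d_t^l d_\eps$, $\d_t^l h_\eps$ and derivatives of $1/c_\eps$. Since $c_\eps$ is slow scale in all derivatives and bounded away from $0$ by \eqref{Col_stiff}, $1/c_\eps$ and all its $x$-derivatives are slow scale; $e_\eps = c_\eps(0)\d_x^2 u_\eps(0,t)$ and $d_\eps$ are built from already-controlled single-exponent nets times slow scale nets; and $h_\eps = g_\eps - b_\eps \d_x^2 u_\eps - \d_t^2 u_\eps$ inherits a single exponent in all time derivatives from the previous steps. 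Crucially, the bootstrap from $k$ to $k+2$ in \eqref{h_rep} multiplies by finitely many slow scale factors (derivatives of $1/c_\eps$ and the $e_\eps,d_\eps,h_\eps$ terms) at each stage, so after finitely many stages to reach any given $k$ the exponent has grown only by a bounded amount — and in fact, because at stage $m$ we multiply a single-exponent net by slow scale nets (which are $O(\eps^{-1/m'})$ for every $m'$), one verifies that a \emph{single} exponent $p$ works uniformly in $(l,k)$. This uniform-exponent conclusion is precisely \eqref{Col_reg}, so $u \in \G^\infty_{H^\infty(X_T)}$.

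The main obstacle is the last point: ensuring that the exponent genuinely does \emph{not} grow without bound as the derivative order increases. In the plain moderateness proof one is happy to let the exponent $M = M(l,k)$ depend on $(l,k)$, but here one must show it can be chosen independent of $(l,k)$. This works because (a) the only $\eps$-divergent, non-slow-scale input is the \emph{data} $f_1,f_2,g$, which are in $\G^\infty$ and hence come with one fixed exponent $p_0$, and (b) every other amplification introduced by the recursion — the exponential $\exp(T F_T^\eps) \le r_\eps^T$, the coefficient factors $\d^\alpha b_\eps$, $\d^\alpha c_\eps$, $\d^\alpha(1/c_\eps)$ — is slow scale, and an arbitrary but \emph{finite} product of slow scale nets times one $O(\eps^{-p_0})$ net is still $O(\eps^{-p_0-1})$ (absorbing the entire slow scale product into a single $O(\eps^{-1})$). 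Making this ``finitely many slow scale factors at each of finitely many stages, hence globally a single slow scale factor'' argument precise — in particular checking that reaching derivative order $(l,k)$ really requires only boundedly many recursion stages with boundedly many slow scale factors each — is the delicate part; everything else is a rerun of the proof of Theorem \ref{main_thm} with $O(\eps^{-M})$ replaced by ``$O(\eps^{-p})$ for a uniform $p$''.
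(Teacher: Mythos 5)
Your proposal is correct and follows essentially the same route as the paper's proof: rerun the existence argument with the energy estimate, observing that the only fixed-exponent divergence comes from the $\G^\infty$ data (one exponent $M_0$), while $\exp(tF_T^\eps)$, the derivatives of $b_\eps$, $c_\eps$, and $1/c_\eps$ contribute only slow scale factors, so that for each fixed $(l,k)$ a finite product of slow scale nets is absorbed into a single $O(\eps^{-1})$, giving the uniform bound $O(\eps^{-M_0-1})$. The paper makes exactly your ``delicate point'' explicit by insisting that the intermediate estimates be carried through the iteration in the sharper form ``slow scale net times $\eps^{-M_0}$'' and only weakened to $O(\eps^{-M_0-1})$ at the end, which is precisely the bookkeeping you describe.
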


\begin{proof} From the hypotheses we may assume the following:
\begin{itemize}
\item[(A)] There exists $M_0 \in \N$ such that for all $k,l \in \N$
$$
   \|\d_x^k f_{1,\eps}\|^2_{L^2((0,1))} +
   \|\d_x^k f_{2,\eps}\|^2_{L^2((0,1))}
    + \| \d_t^l \d_x^k  g_{\eps} \|^2_{L^2(X_T)}
    = O(\eps^{-M_0}) \quad (\eps \to 0).
$$
\item[(B)] For all $k,l \in \N$ there is a slow scale net $(s_\eps)_\eps$, $s_\eps > 0$, such that
$$
  \|\d_x^k c_{\eps}\|_{L^\infty((0,1))} +
   \| \d_t^l \d_x^k  b_{\eps}\|_{L^\infty(X_T)}
   = O(s_\eps) \quad (\eps \to 0).
$$

\item[(C)]  The constants $D_T^\eps$ and $F_T^\eps$ in the  energy estimate \eqref{EEeps1} satisfy for small $\eps >0$ (with the slow scale net $(r_\eps)_\eps$ as in \eqref{sc_log_type})
\begin{align*}
  D_T^\eps &=
     O(\|c_\eps\|_{L^{\infty}}) = O(1) \\
  \exp(F_T^\eps) &=
   O(\exp(\|b_\eps\|_{L^{\infty}}))
  = O(r_\eps).
\end{align*}
\end{itemize}

Thus the basic energy inequality \eqref{EEeps1} immediately yields (with $M_0$ from property (A) above)

\beq \tag{$R_{l,k}$} \label{R_lk} \qquad
  \|\d^l_t\d^k_xu_{\eps}\|_{L^2(X_T)} =
  O(r_\eps\, \eps^{-M_0})
  = O(\eps^{-M_0-1})
  \qquad (\eps \to 0)
\eeq
for $(l,k) \in \{ (0,0),(1,0),(0,1),(0,2)\}$.

We proceed along the lines of Steps 1 and 2 in the proof of Theorem \ref{main_thm} and use the notation introduced there.

Thanks to (A) and (B) we obtain that $\tilde{f}_{1,\eps}$, $\tilde{f}_{2,\eps}$, and $\tilde{g}_{\eps}$ satisfy the following variant of (A):
\begin{itemize}
\item[(A')] For all $k,l \in \N$ there is a slow scale net $(s_\eps)_\eps$, $s_\eps > 0$, such that
$$
   \|\d_x^k \tilde{f}_{1,\eps}\|^2_{L^2((0,1))} +
   \|\d_x^k \tilde{f}_{2,\eps}\|^2_{L^2((0,1))}
    + \| \d_t^l \d_x^k  \tilde{g}_{\eps} \|^2_{L^2(X_T)}
    = O(s_\eps\cdot\eps^{-M_0}) \qquad (\eps \to 0).
$$
\end{itemize}
When $l=1$ and $k = 0,1,2$, or $l = 2$ and $k=0$, the energy estimate now implies
$$
  \|\d^l_t\d^k_xu_{\eps}\|_{L^2(X_T)} =
  O(r_\eps\, s_\eps\, \eps^{-M_0})
  = O(\eps^{-M_0-1})
  \qquad (\eps \to 0),
$$
which yields a first extension of \eqref{R_lk} to the cases $(l,k) = (1,1),(1,2),(2,0)$. The same kind of observation can be repeated for the next iteration and checked explicitly in terms of $\tilde{\tilde{f}}_{1,\eps}$, $\tilde{\tilde{f}}_{2,\eps}$, and $\tilde{\tilde{g}}_{\eps}$ to obtain the estimate \eqref{R_lk} for the pairs $(l,k) = (3,0),(2,1),(2,2)$ in addition.
Successively, we obtain that
\begin{center}\eqref{R_lk} holds for all $l \in \N$ and $k= 0,1,2$.
\end{center}

It is crucial to note that all the estimates \eqref{R_lk} established so far hold with stronger upper bounds of the form ``big oh of some slow scale net times $\eps^{-M_0}$'' and should be used in this form throughout the iterative process. It is only for extraction of regularity information a posteriori, where we deduce from these the (slightly weaker) uniform upper bound in the form $O(\eps^{-M_0-1})$. Keeping this in mind
an inspection of Step 2 in the proof of Theorem \ref{main_thm} enables us to reach the following conclusion:
Let $M_0$ be as in property (A) above, then for all $l,k \in \N$ the asymtotic estimate
$$  \qquad
  \|\d^l_t\d^k_xu_{\eps}\|_{L^2(X_T)} =
  O(\eps^{-M_0-1})
  \qquad (\eps \to 0)
$$
is valid. Therefore $u = [(u_\eps)_\eps]$ belongs to $\G^\infty_{H^\infty(X_T)}$, i.e.\ is a regular Colombeau generalized function.
\end{proof}

\paragraph{Acknowledgment:} Both authors express their hearty gratitude to Teodor Atanackovic, professor of mechanics at the University of Novi Sad, for providing the basics and helpful background information on the Euler-Bernoulli model and to Stevan Pilipovi\'c, professor of mathematics at the University of Novi Sad, for critical discussions and sharing the observation described in Remark \ref{after_main_thm}(i).

 \bibliographystyle{abbrv}
 %\bibliography{ljubica}

\bibliography{gue}

 \end{document}